\documentclass[12pt,reqno]{amsart}
\usepackage{eurosym}
\usepackage{amsfonts}
\usepackage{amsmath}
\usepackage{tikz}
\usetikzlibrary{positioning,calc}
\usepackage{pgfplots}
\usepackage{float}
\usepackage{booktabs}
\usepackage{amssymb,latexsym}
\usepackage{enumerate}
\usepackage[bookmarksnumbered, colorlinks, plainpages]{hyperref}
\usepackage{amsbsy}
\usepackage{amscd}
\usepackage{graphicx}
\usepackage{epsfig}
\usepackage{tabularx}           
\usepackage[export]{adjustbox}  
\usepackage{fancyhdr}
\usepackage{changepage}
\usepackage{longtable}
\usepackage{subcaption}
\usepackage{epstopdf}
\usepackage{capt-of}
\usepackage{placeins}

\usepgfplotslibrary{colormaps}
\pgfplotsset{compat=1.18}
\theoremstyle{plain}
\newtheorem{theorem}{Theorem}[section]
\newtheorem{lemma}{Lemma}[section]
\newtheorem{corollary}{Corollary}[section]
\newtheorem{proposition}{Proposition}[section]

\theoremstyle{definition}

\newtheorem{example}{Example}[section]

\theoremstyle{remark}
\newtheorem{remark}{Remark}

\numberwithin{equation}{section}

\newcommand{\Le}{\mathrm{Le}}
\usepackage{amsthm}
\usepackage{placeins}

\providecommand{\Authorcontributions}[2]{\section*{Author contributions}#2}
\providecommand{\Competinginterests}[2]{\section*{Competing interests}#2}

\begin{document}
\date{}
\title[Zeros and Their Distribution for Leonardo Polynomials]{Zeros and Their Distribution for Leonardo Polynomials}
\author[M. Karag\"{o}z]{Muhammet KARAG\"{O}Z}
\address{\.{I}zmir Democracy University, Department of Mathematics\\
35140 Karaba\u{g}lar, \.{I}zmir, T\"{u}rkiye}
\email{muhammet.karagoz@idu.edu.tr}
\author[N. \"{O}zg\"{u}r]{N\.{I}HAL \"{O}ZG\"{U}R}
\address{\.{I}zmir Democracy University, Department of Mathematics\\
35140 Karaba\u{g}lar, \.{I}zmir, T\"{u}rkiye}
\email{nihal.ozgur@idu.edu.tr}
\date{}
\subjclass[2020]{12D10, 30C15, 11B39, 65H04, 47J26}
\keywords{Leonardo polynomials, polynomial zeros, Chebyshev polynomials, fixed-point iteration, Banach fixed-point theorem}

\begin{abstract}
We investigate the zeros of Leonardo polynomials and their distribution in the complex plane. We show that the structure of the zeros differs between even and odd degrees. For even degrees, all zeros are simple and purely imaginary, with imaginary parts lying in \((-2,2)\). For odd degrees, there is exactly one positive real zero, whereas no negative real or purely imaginary zeros occur. We obtain two-sided bounds for the unique positive real zero, prove that it decreases strictly to zero as the degree increases through odd values, and determine its asymptotic behavior.

Using the relation between Leonardo and Fibonacci polynomials together with a complex trigonometric parametrization, we transform the zero equation into a family of fixed-point problems. Suitable invariant disks are constructed, and the corresponding mappings are shown to be contractions. This yields a constructive description of the complete zero set, proves the simplicity of all zeros, and provides geometrically convergent iterations for their numerical approximation. The method also provides explicit control over the approximation error.

Finally, we establish a degree-dependent vertical strip containing all zeros and show that its width tends to zero as the degree increases. Consequently, the zeros become uniformly concentrated near the imaginary axis. Numerical examples illustrate the convergence of the proposed method, the associated error estimates, and the resulting zero distributions.
\end{abstract}

\maketitle

\section{Introduction}
Leonardo polynomials extend the Leonardo number sequence and connect it
with classical polynomial families defined by recurrence relations.
Catarino and Borges~\cite{Catarino and Borges2020} studied the identities and
structural properties of Leonardo numbers; further developments and
generalizations of the sequence were considered in
\cite{AlpKocer2021,Soykan2021}. Metallic generalizations involving the silver, bronze, and copper
cases were investigated by Kumari, Prasad, and Singh~\cite{KumariPrasadSingh2026}. Prasad and Kumari~\cite{PrasadKumari2025} introduced the family of Leonardo polynomials considered in the present paper and established generating functions, summation identities, relations
with Fibonacci, Lucas, and Chebyshev polynomials, and irreducibility
results. Related developments include bivariate Leonardo polynomials
and their connections with Riordan arrays~\cite{AlpKocer2025},
Gaussian and hybrid extensions
\cite{PrasadKumariSingh2026,Yagmur2023},
the $q$-analogues studied by Tu\u{g}lu, Ekinci and \c{C}etin
Keskin~\cite{TugluEkinciKeskin2026}, and recent results on Leonardo polynomials,
including generating functions and linearization formulas,
by Soykan~\cite{Soykan2026}. These studies provide an algebraic basis
for investigating the zeros of Leonardo polynomials and their
distribution in the complex plane.

Zeros of related polynomial families defined by recurrence relations
have also received considerable attention. Hoggatt and Bicknell~\cite{HoggattBicknell1973}
obtained explicit representations for the zeros of the Fibonacci and Lucas
polynomials. The results on Tribonacci polynomials and the bounds of their zeros were given in~\cite{Kaymak2018}, while the zeros of \(R\)-Bonacci polynomials and their
derivatives were studied in~\cite{KaymakOzgur2022}. These works motivate the investigation of root location, multiplicity, and computation for Leonardo polynomials.

A fundamental identity for Leonardo polynomials is
\begin{equation}
\label{eq:leonardo-fibonacci}
\Le_n(x)=2F_{n+1}(x)-1, n\geq0
\end{equation}
where \(F_n(x)\) denotes the Fibonacci polynomial; see
\cite{PrasadKumari2025}. Hence,

\[
\Le_n(x)=0
\quad\Longleftrightarrow\quad
F_{n+1}(x)=\frac12.
\]
Thus, the zeros of $\Le_n$ form the level set
\[
\left\{x\in\mathbb{C}:F_{n+1}(x)=\frac12\right\}.
\]
Consequently, the known formulas for the zeros of $F_{n+1}$ do not directly determine the zeros of $\Le_n$.
In this paper, we study the zeros of \(\Le_n\) from both qualitative and constructive points of view. We first show that their distribution depends essentially on whether \(n\) is even or odd. The even-degree case is characterized by purely imaginary zeros, whereas the odd-degree case exhibits a distinct real zero structure, including a unique positive real zero whose magnitude decreases as \(n\) increases. For every positive odd degree \(n\), the unique positive
zero \(\xi_n\) satisfies
\[
2\sinh\!\left(\frac{\log\varphi}{n+1}\right)
<
\xi_n
\leq
\frac{1}{n+1},
\qquad
\varphi=\frac{1+\sqrt5}{2},
\]
with equality in the upper bound only for \(n=1\).
We also prove that
\[
\xi_{n+2}<\xi_n
\]
and determine the asymptotic behavior
\[
\xi_n\sim\frac{2\log\varphi}{n+1}
\qquad
(n\to\infty,\ n\ \mathrm{odd}).
\]
In particular, the lower bound is asymptotically sharp.
Our main contribution is a constructive fixed-point formulation for
determining the entire zero set. Using the Leonardo--Fibonacci relation and the change of variables
\[
x=2i\cos\theta,
\]
for \(\sin\theta\neq0\), the zero equation is equivalent to
\[
\sin((n+1)\theta)
=
\lambda_n\sin\theta,
\qquad
\lambda_n=\frac{1}{2i^n}.
\]
We associate this equation with \(n\) fixed-point problems, one for each
index \(k=1,\ldots,n\). Here, each index \(k\) determines a distinct
solution branch arising from the inverse-sine representation of the
trigonometric equation. For each branch, an explicit invariant disk is
constructed and the corresponding map is shown to be a contraction.
Banach's fixed-point theorem then gives a unique fixed point in each disk,
geometric convergence from every initial point in the disk, and an explicit
a priori error estimate. An a posteriori bound further relates the difference
between consecutive iterates to the error in the corresponding zero
approximation. The resulting fixed points generate all zeros
of \(\Le_n\) without repetition through the transformation
\[
x_k=2i\cos\theta_*^{(k)},
\qquad k=1,\ldots,n.
\]
Hence every zero is simple, and the fixed-point iterations provide exact
limit representations as well as numerical approximations.

Finally, we derive the localization estimate
\[
|\operatorname{Re}z|
\leq
2\sinh\!\left(\frac{1}{n+1}\right)
\]
for every zero \(z\) of \(\Le_n\). Consequently, the zero set becomes uniformly concentrated around the imaginary axis as the degree increases. Numerical examples illustrate the convergence of the fixed-point iterations, the theoretical error estimates, and the root distributions for representative degrees.
\section{Preliminaries}
\label{sec:preliminaries}

We collect here the identities and standard tools that will be used
throughout the paper. For background on Fibonacci and Lucas numbers,
see~\cite{Koshy2001,Vajda1989}.
The Fibonacci polynomials are defined by
\[
F_0(x)=0,\qquad
F_1(x)=1,\qquad
F_n(x)=xF_{n-1}(x)+F_{n-2}(x),
\quad n\geq2.
\]
Hoggatt and Bicknell~\cite{HoggattBicknell1973} gave the explicit
representation
\begin{equation*}
F_{n+1}(x)
=
\sum_{j=0}^{\lfloor n/2\rfloor}
\binom{n-j}{j}x^{n-2j},
\qquad n\geq0.
\end{equation*}

Let
\[
\alpha(x)=\frac{x+\sqrt{x^2+4}}{2},
\qquad
\beta(x)=\frac{x-\sqrt{x^2+4}}{2}.
\]
In terms of these characteristic roots, the Fibonacci polynomials have the following Binet-type representation~\cite{HoggattBicknell1973}
\[
F_n(x)
=
\frac{\alpha(x)^n-\beta(x)^n}
{\alpha(x)-\beta(x)},
\qquad x^2+4\neq0.
\]
Following Prasad and Kumari~\cite{PrasadKumari2025}, the Leonardo
polynomials are defined by
\[
\Le_0(x)=1,\qquad
\Le_1(x)=2x-1,
\]
and
\[
\Le_n(x)
=
x\Le_{n-1}(x)+\Le_{n-2}(x)+x,
\qquad n\geq2.
\]

For the family of Leonardo polynomials considered here,
the Leonardo--Fibonacci identity
\eqref{eq:leonardo-fibonacci}, established by
Prasad and Kumari~\cite{PrasadKumari2025}, together with
the explicit representation of the Fibonacci polynomials,
yields
\begin{equation}
\label{eq:leonardo-explicit}
\Le_n(x)
=
2\sum_{j=0}^{\lfloor n/2\rfloor}
\binom{n-j}{j}x^{n-2j}-1,
\qquad n\geq0.
\end{equation}

Let \(U_n\) denote the Chebyshev polynomial of the second kind.
We shall use the following standard identities
\cite{MasonHandscomb2002,Rivlin2020}:
\begin{equation}
\label{eq:chebyshev-trigonometric}
U_n(\cos\theta)
=
\frac{\sin((n+1)\theta)}{\sin\theta},
\qquad
\sin\theta\neq0,
\end{equation}
together with
\begin{equation*}
U_n(1)=n+1,
\qquad
U_n(-1)=(-1)^n(n+1),
\end{equation*}
and
\begin{equation*}
U_n(-x)=(-1)^nU_n(x).
\end{equation*}
The Fibonacci--Chebyshev relation
\cite{HoggattBicknell1973,MasonHandscomb2002} is
\begin{equation}
\label{eq:fibonacci-chebyshev}
F_{n+1}(x)
=
i^n
U_n\left(\frac{x}{2i}\right),
\qquad n\geq0.
\end{equation}
Combining \eqref{eq:chebyshev-trigonometric} and
\eqref{eq:fibonacci-chebyshev}, we obtain
\begin{equation*}
F_{n+1}(2i\cos\theta)
=
i^n
\frac{\sin((n+1)\theta)}{\sin\theta},
\qquad
\sin\theta\neq0.
\end{equation*}
For further relations among Fibonacci, Lucas, and Chebyshev
polynomials, see also~\cite{FrontczakGoy2021,SmajlovicSabanacSceta2025}.

\section{Classification of the Zeros}
We begin by determining the real zeros of Leonardo polynomials
and then locate all zeros in the even-degree case.
By~\eqref{eq:leonardo-explicit}, \(\Le_n\) has real coefficients
and degree \(n\) for \(n\geq1\).

For a nonzero polynomial \(P(x)\) with real coefficients,
let \(V(P)\) denote the number of sign changes in its
coefficient sequence, arranged in descending order of degree,
after zero coefficients have been omitted.
Descartes' rule of signs states that the number of positive
real zeros, counted with multiplicity, equals \(V(P)\) or
is smaller by an even integer; see~\cite{Wang2004}.
\begin{theorem}
\label{thm:even-no-real}
For every integer $m\geq1$, the polynomial $\Le_{2m}$ has no real zero.
\end{theorem}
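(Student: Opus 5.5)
The plan is to work directly from the explicit representation \eqref{eq:leonardo-explicit}. For $n=2m$ every exponent $n-2j=2(m-j)$ is even, so
\[
\Le_{2m}(x)=2\sum_{j=0}^{m}\binom{2m-j}{j}x^{2(m-j)}-1 .
\]
The constant term occurs at $j=m$ and equals $2\binom{m}{m}-1=1$. All other coefficients $2\binom{2m-j}{j}$ with $0\le j\le m-1$ are positive integers, since $2m-j\ge j$ in that range. So $\Le_{2m}(x)$ is a polynomial in $x^2$ with strictly positive coefficients.

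For real $x$ we have $x^{2(m-j)}\ge 0$. Hence
\[
\Le_{2m}(x)=1+2\sum_{j=0}^{m-1}\binom{2m-j}{j}x^{2(m-j)}\ge 1>0 ,
\]
and $\Le_{2m}$ cannot vanish on $\mathbb{R}$. This finishes the argument.

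An equivalent route in the spirit of the section is to use Descartes' rule of signs. The coefficient sequence has $V(\Le_{2m})=0$, so there are no positive zeros. The polynomial is even, so there are no negative zeros either, and $\Le_{2m}(0)=1\neq0$ rules out zero. I would mention this route only as a remark; the positivity bound already suffices.

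The only step that needs care is the constant term. One must check that $2F_{2m+1}(0)-1=2\cdot1-1=1$ is positive, which is where the ``$-1$'' could a priori destroy positivity. This can also be cross-checked via $F_{2m+1}(0)=1$ from the recurrence.
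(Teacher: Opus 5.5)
Your proposal is correct and matches the paper's proof. Both use \eqref{eq:leonardo-explicit} to write $\Le_{2m}(x)=1+2\sum_{j=0}^{m-1}\binom{2m-j}{j}x^{2m-2j}$ and conclude that $\Le_{2m}(x)\geq 1>0$ for all real $x$.
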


\begin{proof}
By (2.1), for every $x\in\mathbb{R}$,
\[
\operatorname{Le}_{2m}(x)
=1+2\sum_{j=0}^{m-1}
\binom{2m-j}{j}x^{2m-2j}
\geq 1>0,
\]
since all coefficients in the sum are positive and all
exponents are even. Hence $\operatorname{Le}_{2m}$
has no real zero.
\end{proof}

\begin{theorem}
\label{thm:odd-real}
For every integer $m\geq0$, the polynomial $\Le_{2m+1}$ has exactly one real zero. This zero is positive and simple.
\end{theorem}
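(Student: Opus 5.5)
By \eqref{eq:leonardo-explicit} with $n=2m+1$, we have
\[
\Le_{2m+1}(x)=2\sum_{j=0}^{m}\binom{2m+1-j}{j}x^{2m+1-2j}-1 .
\]
Every power of $x$ in the sum is odd and every coefficient is positive, so only the constant term $-1$ is negative. The plan is to read off the real zeros from this sign pattern, split into the half-lines $x>0$, $x<0$ and the point $x=0$. We then get simplicity from a monotonicity argument rather than from the multiplicity count of Descartes' rule.

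\emph{Positive half-line.} Arranged in descending degree, the nonzero coefficients are positive down to the degree-one term $2\binom{m+1}{m}x$, followed by $-1$. Hence $V(\Le_{2m+1})=1$, and Descartes' rule gives exactly one positive zero counted with multiplicity, so it is simple. Independently, on $(0,\infty)$ the polynomial is strictly increasing, since its derivative $2\sum_j(2m+1-2j)\binom{2m+1-j}{j}x^{2m-2j}$ has positive terms. Together with $\Le_{2m+1}(0)=-1<0$ and $\Le_{2m+1}(x)\to+\infty$, this gives existence and uniqueness by the intermediate value theorem. The derivative is strictly positive at the zero, so simplicity follows directly as well.

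\emph{Nonpositive half-line.} For $x<0$, each odd power $x^{2m+1-2j}$ is negative, so the sum is negative and $\Le_{2m+1}(x)<-1<0$. Also $\Le_{2m+1}(0)=-1\neq0$. One could instead apply Descartes' rule to $\Le_{2m+1}(-x)$, whose coefficients are all negative and so show no sign change, but the direct estimate is cleaner.

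The case $m=0$ is immediate: $\Le_1(x)=2x-1$ has the single simple zero $x=\tfrac12$. There is no real obstacle here. The only point needing care is bookkeeping the parity of the exponents, namely that all $n-2j$ are odd when $n$ is odd, so that the sum is an odd function with positive coefficients.
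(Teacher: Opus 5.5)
Your proof is correct. For positive zeros it matches the paper's argument: there is one sign change, so Descartes' rule gives exactly one positive zero counted with multiplicity, hence a simple one. To exclude negative zeros the paper applies Descartes' rule to $\Le_{2m+1}(-x)$, while you use the direct bound $\Le_{2m+1}(x)<-1$ for $x<0$; the two are the same in substance.

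The genuinely different ingredient is your monotonicity argument, and it proves more than you use it for. The derivative
$\Le_{2m+1}'(x)=2\sum_{j=0}^{m}(2m+1-2j)\binom{2m+1-j}{j}x^{2m-2j}$
contains only even powers with positive coefficients, and its constant term is $2(m+1)>0$. So $\Le_{2m+1}'>0$ on all of $\mathbb{R}$, not just on $(0,\infty)$. Hence $\Le_{2m+1}$ is a strictly increasing bijection of $\mathbb{R}$ with $\Le_{2m+1}(0)=-1$. This gives existence, uniqueness, positivity and simplicity of the real zero at once, with no appeal to Descartes' rule and no separate treatment of $x<0$.

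The paper's route fits the Descartes framework it sets up at the start of the section. Yours is self-contained, gets simplicity directly from $\Le_{2m+1}'\neq0$, and also records the strict monotonicity of $\Le_{2m+1}$ on $\mathbb{R}$.
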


\begin{proof}
Let \(n=2m+1\) with \(m\geq0\). From~\eqref{eq:leonardo-explicit},
\begin{equation}
\label{eq:classification-odd-expansion}
\Le_{2m+1}(x)
=
2\sum_{j=0}^{m}
\binom{2m+1-j}{j}x^{2m+1-2j}-1.
\end{equation}

All coefficients of the nonconstant terms are positive, whereas the
constant term is \(-1\). Hence, after the zero coefficients are omitted,
the coefficient sequence of \(\Le_{2m+1}(x)\) contains exactly one sign
change. By Descartes' rule of signs, \(\Le_{2m+1}\) has exactly one
positive real zero, counted with multiplicity. Therefore, this zero is
simple.

We next examine the negative real zeros. Replacing \(x\) by \(-x\) in
\eqref{eq:classification-odd-expansion}, and using the fact that all
powers \(2m+1-2j\) are odd, we obtain
\[
\Le_{2m+1}(-x)
=
-2\sum_{j=0}^{m}
\binom{2m+1-j}{j}x^{2m+1-2j}-1.
\]
All nonzero coefficients of \(\Le_{2m+1}(-x)\) are negative, so its
coefficient sequence contains no sign change. Hence, by Descartes'
rule of signs, \(\Le_{2m+1}\) has no negative real zero.

Finally,
\[
\Le_{2m+1}(0)=-1\neq0,
\]
so \(x=0\) is not a zero. Consequently, \(\Le_{2m+1}\) has exactly one
real zero, which is positive and simple.
\end{proof}

\begin{theorem}
\label{thm:positive-root-bounds}
Let $n$ be a positive odd integer, and let $\xi_n$ denote the unique
positive zero of $\Le_n$. If
\[
\varphi=\frac{1+\sqrt{5}}{2},
\]
then
\begin{equation}
\label{eq:classification-positive-bounds}
2\sinh\!\left(\frac{\log\varphi}{n+1}\right)
<
\xi_n
\leq
\frac{1}{n+1}.
\end{equation}
Equality in the upper bound holds if and only if $n=1$. Moreover,
\[
\lim_{m\to\infty}\xi_{2m+1}=0.
\]
\end{theorem}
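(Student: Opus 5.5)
The plan is to combine three ingredients: the monotonicity of $\Le_n$ on $(0,\infty)$, a hyperbolic parametrization of the positive axis that gives the lower bound, and a linear minorant that gives the upper bound. Throughout, let $n=2m+1$ and $N=n+1=2m+2$, which is even. By \eqref{eq:classification-odd-expansion}, every nonconstant coefficient of $\Le_n$ is positive. Hence $\Le_n$ is strictly increasing on $[0,\infty)$, with $\Le_n(0)=-1$. So by Theorem~\ref{thm:odd-real}, for $x>0$ we have $\Le_n(x)<0$ exactly when $x<\xi_n$, and $\Le_n(x)>0$ exactly when $x>\xi_n$. Each bound therefore reduces to evaluating the sign of $\Le_n$ at one point.

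\emph{Upper bound.} In \eqref{eq:classification-odd-expansion}, the term with $j=m$ is $2\binom{m+1}{m}x=(2m+2)x=Nx$. All other terms are nonnegative for $x\ge 0$, so
\[
\Le_n(x)\ge Nx-1 \qquad (x\ge 0).
\]
For $m\ge1$ and $x>0$ the inequality is strict, because the leading term $2x^{n}$ is present and positive. At $x=1/N$ this gives $\Le_n(1/N)\ge 0$, hence $\xi_n\le 1/N$. For $m\ge1$ the inequality is strict, so $\Le_n(1/N)>0$ and $\xi_n<1/N$. For $n=1$ we have $\Le_1(x)=2x-1$, so $\xi_1=1/2=1/N$. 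This settles the equality case.

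\emph{Lower bound.} Substitute $x=2\sinh t$ with $t>0$. Then $\sqrt{x^2+4}=2\cosh t$, which gives $\alpha=e^{t}$ and $\beta=-e^{-t}$. Since $N$ is even, the Binet formula yields
\[
F_{N}(2\sinh t)=\frac{e^{Nt}-e^{-Nt}}{2\cosh t}=\frac{\sinh(Nt)}{\cosh t}.
\]
Therefore $\Le_n(2\sinh t)=\dfrac{2\sinh(Nt)}{\cosh t}-1$. Take $t_0=\log\varphi/N$. Then $2\sinh(Nt_0)=\varphi-\varphi^{-1}=1$, so
\[
\Le_n(2\sinh t_0)=\frac{1}{\cosh t_0}-1<0,
\]
because $t_0>0$ implies $\cosh t_0>1$. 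By monotonicity, $\xi_n>2\sinh t_0$, which is the strict lower bound in \eqref{eq:classification-positive-bounds}.

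\emph{Limit.} Since $0<\xi_{2m+1}\le 1/(2m+2)$, squeezing gives $\xi_{2m+1}\to0$.

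The only delicate step is the Binet evaluation. One must check that $\beta(2\sinh t)=-e^{-t}$, using the principal branch of the square root for real positive argument. One must also check that the evenness of $N$ turns $\alpha^N-\beta^N$ into $2\sinh(Nt)$ rather than $2\cosh(Nt)$. Once this is verified, everything else is an elementary sign check.
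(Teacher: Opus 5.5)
Your proof is correct and uses the same ingredients as the paper. For the upper bound you use the linear coefficient $n+1$ of $\Le_n$, and for the lower bound the substitution $x=2\sinh t$ with the Binet formula, where $n+1$ being even gives $\sinh((n+1)t)/\cosh t$. The only difference is cosmetic: you test the sign of $\Le_n$ at the candidate bound points and use monotonicity of $\Le_n$ on $(0,\infty)$, whereas the paper substitutes $\xi_n$ itself, getting $1>(n+1)\xi_n$ and $\sinh((n+1)t)=\tfrac12\cosh t>\tfrac12$, and then uses monotonicity of $\sinh$.
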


\begin{proof}
We first prove the upper bound. For odd $n\geq3$, the polynomial
$\Le_n$ can be written as
\[
\Le_n(x)
=
-1+(n+1)x
+
\sum_{\substack{3\leq k\leq n\\ k\ \mathrm{odd}}}
a_kx^k,
\qquad a_k>0.
\]
Since $\Le_n(\xi_n)=0$ and $\xi_n>0$, we have
\[
1
=
(n+1)\xi_n
+
\sum_{\substack{3\leq k\leq n\\ k\ \mathrm{odd}}}
a_k\xi_n^k
>
(n+1)\xi_n.
\]
Thus
\[
\xi_n<\frac{1}{n+1}.
\]
For $n=1$, $\Le_1(x)=2x-1$, so
\[
\xi_1=\frac12=\frac{1}{1+1}.
\]
Hence
\[
\xi_n\leq\frac{1}{n+1},
\]
with equality if and only if $n=1$.

For the lower bound, using
\[
\Le_n(x)=2F_{n+1}(x)-1,
\]
the equation $\Le_n(\xi_n)=0$ gives
\begin{equation}
\label{eq:positive-root-fibonacci-value}
F_{n+1}(\xi_n)=\frac12.
\end{equation}
Since $\xi_n>0$, write
\[
\xi_n=2\sinh t
\]
for some $t>0$. Then
\[
\sqrt{\xi_n^2+4}=2\cosh t,
\]
and the characteristic roots associated with the Fibonacci polynomial
recurrence are
\[
\alpha=e^t,
\qquad
\beta=-e^{-t}.
\]
Since \(n+1\) is even, the Binet representation given above yields
\[
F_{n+1}(\xi_n)
=
\frac{e^{(n+1)t}-e^{-(n+1)t}}
     {e^t+e^{-t}}
=
\frac{\sinh((n+1)t)}{\cosh t}.
\]
Together with \eqref{eq:positive-root-fibonacci-value}, this gives
\[
\sinh((n+1)t)=\frac12\cosh t>\frac12.
\]
Since
\[
\sinh(\log\varphi)
=
\frac{\varphi-\varphi^{-1}}{2}
=
\frac12,
\]
and $\sinh$ is strictly increasing on $\mathbb{R}$, we obtain
\[
(n+1)t>\log\varphi.
\]
Therefore,
\[
t>\frac{\log\varphi}{n+1},
\]
and hence
\[
\xi_n
=
2\sinh t
>
2\sinh\!\left(\frac{\log\varphi}{n+1}\right).
\]
This proves \eqref{eq:classification-positive-bounds}.

Finally, as $n\to\infty$ through odd integers,
\[
2\sinh\!\left(\frac{\log\varphi}{n+1}\right)\to0
\qquad\text{and}\qquad
\frac{1}{n+1}\to0.
\]
Thus, by the squeeze theorem,
\[
\lim_{\substack{n\to\infty\\ n\ \mathrm{odd}}}\xi_n=0,
\]
or equivalently,
\[
\lim_{m\to\infty}\xi_{2m+1}=0.
\]
\end{proof}
\begin{corollary}
\label{cor:positive-root-asymptotics}
For each positive odd integer \(n\), let \(\xi_n\) denote
the unique positive zero of \(\Le_n\), and let \(\varphi=\frac{1+\sqrt5}{2}\).
Then
\[
\xi_{n+2}<\xi_n
\]
for every positive odd integer \(n\). Moreover,
\begin{equation}
\label{eq:positive-root-asymptotic}
\lim_{\substack{n\to\infty\\ n\ \mathrm{odd}}}
(n+1)\xi_n
=
2\log\varphi.
\end{equation}
Consequently,
\[
\xi_n\sim\frac{2\log\varphi}{n+1}
\qquad
(n\to\infty,\ n\ \mathrm{odd}),
\]
and the lower bound in
\eqref{eq:classification-positive-bounds}
is asymptotically sharp.
\end{corollary}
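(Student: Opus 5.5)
We work with the parametrization from the proof of Theorem~\ref{thm:positive-root-bounds}. Write $\xi_n=2\sinh t_n$ with $t_n>0$. That proof shows that $t_n$ satisfies
\[
\sinh((n+1)t_n)=\tfrac12\cosh t_n .
\]
Both parts of the corollary will come from this single equation.

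\emph{Monotonicity.} Since $2\sinh$ is strictly increasing on $(0,\infty)$, it suffices to prove $t_{n+2}<t_n$. For fixed odd $n$, define on $(0,\infty)$
\[
g_n(t)=\sinh((n+1)t)-\tfrac12\cosh t .
\]
Then $t_n$ is a zero of $g_n$. We show it is the only one. Indeed, $g_n(0)=-\tfrac12<0$ and $g_n(t)\to\infty$ as $t\to\infty$. A zero of $g_n$ on $(0,\infty)$ corresponds to a positive zero $2\sinh t$ of $\Le_n$, and by Theorem~\ref{thm:odd-real} there is exactly one. So $g_n<0$ on $(0,t_n)$ and $g_n>0$ on $(t_n,\infty)$.

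Now evaluate $g_{n+2}$ at $t_n$. Because $\sinh$ is increasing,
\[
g_{n+2}(t_n)=\sinh((n+3)t_n)-\tfrac12\cosh t_n>\sinh((n+1)t_n)-\tfrac12\cosh t_n=0 .
\]
Applying the same sign description to $g_{n+2}$ gives $t_n>t_{n+2}$, and hence $\xi_{n+2}<\xi_n$.

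\emph{Asymptotics.} Theorem~\ref{thm:positive-root-bounds} gives $0<t_n$ and $\xi_n\to0$, so $t_n=\operatorname{arsinh}(\xi_n/2)\to0$. Therefore $\cosh t_n\to1$, and
\[
(n+1)t_n=\operatorname{arsinh}\!\bigl(\tfrac12\cosh t_n\bigr)\to\operatorname{arsinh}(\tfrac12)=\log\varphi ,
\]
where the last equality is the identity $\sinh(\log\varphi)=\tfrac12$ used earlier. Since $t_n\to0$, we have $\sinh t_n/t_n\to1$, so
\[
(n+1)\xi_n=2(n+1)t_n\cdot\frac{\sinh t_n}{t_n}\to2\log\varphi .
\]
This is \eqref{eq:positive-root-asymptotic}, and it is equivalent to $\xi_n\sim 2\log\varphi/(n+1)$.

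\emph{Sharpness of the lower bound.} We also have
\[
(n+1)\cdot2\sinh\!\bigl(\log\varphi/(n+1)\bigr)\to2\log\varphi .
\]
So the ratio of $\xi_n$ to the lower bound in \eqref{eq:classification-positive-bounds} tends to $1$. By contrast, the upper bound $1/(n+1)$ is off by the constant factor $2\log\varphi\approx0.962$, so it is not asymptotically sharp.

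The only delicate step is the uniqueness of the zero of $g_n$ on $(0,\infty)$, which is needed for the sign comparison. It follows directly from Theorem~\ref{thm:odd-real} via the bijection $t\mapsto2\sinh t$. Alternatively, one can check that $g_n'(t)=(n+1)\cosh((n+1)t)-\tfrac12\sinh t>0$, since $\cosh((n+1)t)\ge\cosh t>\sinh t$; this makes $g_n$ strictly increasing, which gives uniqueness.
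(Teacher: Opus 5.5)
Your proof is correct and essentially follows the paper's argument. The asymptotic and sharpness steps are identical. Your monotonicity step is the paper's comparison written in the variable $t$: the paper uses the Fibonacci recurrence to get $\Le_{n+2}(\xi_n)=\Le_n(\xi_n)+2\xi_nF_{n+2}(\xi_n)>0$, then applies the Intermediate Value Theorem with $\Le_{n+2}(0)=-1$, and since $\Le_{n+2}(2\sinh t)=2g_{n+2}(t)/\cosh t$ this is exactly your inequality $g_{n+2}(t_n)>0$. Your remark that $g_n'>0$ is a nice addition, because it gives uniqueness of the positive zero without appealing to Descartes' rule of signs.
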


\begin{proof}
Let \(n\) be a positive odd integer.
The Fibonacci recurrence and
\eqref{eq:leonardo-fibonacci} give
\[
\begin{aligned}
\Le_{n+2}(x)-\Le_n(x)
&=
2\bigl(F_{n+3}(x)-F_{n+1}(x)\bigr)\\
&=
2xF_{n+2}(x).
\end{aligned}
\]
Since \(F_{n+2}(x)>0\) for \(x>0\), evaluating this identity
at \(x=\xi_n\) yields
\[
\Le_{n+2}(\xi_n)
=
2\xi_nF_{n+2}(\xi_n)
>
0.
\]
On the other hand, \(\Le_{n+2}(0)=-1\).
The Intermediate Value Theorem therefore gives a zero of
\(\Le_{n+2}\) in \((0,\xi_n)\).
By the uniqueness of the positive zero established in
Theorem~\ref{thm:odd-real}, this zero is \(\xi_{n+2}\).
Hence
\[
0<\xi_{n+2}<\xi_n.
\]

To obtain the asymptotic behavior, write
\[
\xi_n=2\sinh t_n,
\qquad t_n>0.
\]
By Theorem~\ref{thm:positive-root-bounds},
\(\xi_n\to0\) as \(n\to\infty\) through odd integers,
and therefore \(t_n\to0\).
As shown in the proof of that theorem,
\[
\sinh\bigl((n+1)t_n\bigr)
=
\frac12\cosh t_n.
\]
Consequently,
\[
\sinh\bigl((n+1)t_n\bigr)
\longrightarrow
\frac12
=
\sinh(\log\varphi).
\]
The continuity of the inverse hyperbolic sine gives
\[
(n+1)t_n\longrightarrow\log\varphi.
\]
Using \(\sinh t_n/t_n\to1\), we obtain
\[
(n+1)\xi_n
=
2(n+1)t_n\,\frac{\sinh t_n}{t_n}
\longrightarrow
2\log\varphi.
\]
This proves \eqref{eq:positive-root-asymptotic}.

Finally,
\[
2(n+1)\sinh\!\left(
\frac{\log\varphi}{n+1}
\right)
\longrightarrow
2\log\varphi.
\]
Combining this limit with
\eqref{eq:positive-root-asymptotic}, we conclude that
\[
\lim_{\substack{n\to\infty\\ n\ \mathrm{odd}}}
\frac{\xi_n}{
2\sinh\!\left(\dfrac{\log\varphi}{n+1}\right)
}
=
1.
\]
Thus the lower bound is asymptotically sharp.
\end{proof}
\begin{theorem}
\label{thm:odd-no-imaginary}
For every integer $m\geq0$, the polynomial $\Le_{2m+1}$ has no purely imaginary zero.
\end{theorem}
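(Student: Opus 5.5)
Substituting $x=iy$ with $y\in\mathbb{R}$ into the explicit representation \eqref{eq:leonardo-explicit} with $n=2m+1$ will separate $\Le_{2m+1}(iy)$ into real and imaginary parts. Every power $x^{2m+1-2j}$ is odd, so
\[
(iy)^{2m+1-2j}=i^{2m+1-2j}y^{2m+1-2j}=i\,(-1)^{m-j}y^{2m+1-2j}.
\]
This gives
\[
\Le_{2m+1}(iy)=-1+2i\sum_{j=0}^{m}(-1)^{m-j}\binom{2m+1-j}{j}y^{2m+1-2j}.
\]

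The sum multiplying $2i$ is real, so the real part of $\Le_{2m+1}(iy)$ is identically $-1$. Hence $\Le_{2m+1}(iy)\neq0$ for every real $y$, including $y=0$, where the value is $\Le_{2m+1}(0)=-1$ as already used in the proof of Theorem~\ref{thm:odd-real}. This proves the statement.

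The same conclusion can be checked structurally, and I would mention it as a remark. Equation \eqref{eq:fibonacci-chebyshev} gives $F_{2m+2}(iy)=i^{2m+1}U_{2m+1}(y/2)$. Since $U_{2m+1}$ has real coefficients, this value is purely imaginary, so $2F_{2m+2}(iy)-1$ has real part $-1$. Alternatively, $F_{2m+2}$ is an odd polynomial with real coefficients, so $F_{2m+2}(iy)\in i\mathbb{R}$ and cannot equal $\tfrac12$.

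There is no real obstacle here. The only point needing care is the parity bookkeeping: all exponents are odd, so every nonconstant term becomes purely imaginary and only the constant $-1$ contributes to the real part.
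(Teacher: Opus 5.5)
Your proof is correct and follows the paper's own argument: substitute $x=iy$ into the explicit expansion, note that every exponent $2m+1-2j$ is odd so each nonconstant term is purely imaginary, and conclude that $\operatorname{Re}\Le_{2m+1}(iy)=-1\neq 0$. Your alternative remark via the odd polynomial $F_{2m+2}$ (or $i^{2m+1}U_{2m+1}(y/2)\in i\mathbb{R}$) is also valid, but it is not needed.
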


\begin{proof}
Let $y\in\mathbb{R}$. Substituting $x=iy$ into~\eqref{eq:classification-odd-expansion} gives
\[
\Le_{2m+1}(iy)
=
-1+2i\sum_{j=0}^{m}
(-1)^{m-j}
\binom{2m+1-j}{j}
y^{2m+1-2j}.
\]
The sum is real, and therefore
\[
\operatorname{Re}\bigl(\Le_{2m+1}(iy)\bigr)=-1.
\]
Thus $\Le_{2m+1}(iy)\neq0$ for every real $y$.
\end{proof}

To locate the zeros in the even-degree case, we use the known Leonardo--Chebyshev identity~\cite{PrasadKumari2025}
\begin{equation}
\label{eq:classification-chebyshev}
\Le_n(x)
=
2i^nU_n\!\left(\frac{x}{2i}\right)-1,
\qquad n\geq0,
\end{equation}
where $U_n$ is the Chebyshev polynomial of the second kind. This follows by combining $\Le_n(x)=2F_{n+1}(x)-1$ with
$F_{n+1}(x)=i^nU_n(x/(2i))$.

\begin{theorem}
\label{thm:even-all-imaginary}
For every integer \(m\geq1\), all \(2m\) zeros of\hspace{0.2em}\(\Le_{2m}\) are
distinct and purely imaginary, with imaginary parts lying in
\((-2,2)\).
More precisely, there exist real numbers
\[
0<\mu_1<\mu_2<\cdots<\mu_m<2
\]
such that the zero set of $\Le_{2m}$ is
\[
\{\pm i\mu_1,\ldots,\pm i\mu_m\}.
\]
In particular, every zero of $\Le_{2m}$ is simple.
\end{theorem}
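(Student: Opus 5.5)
Substitute $x=iy$ with $y$ real and reduce the zero problem to a real equation in $y$. By \eqref{eq:classification-chebyshev} with $n=2m$ and $i^{2m}=(-1)^m$,
\[
\Le_{2m}(iy)=2(-1)^mU_{2m}\!\left(\tfrac{y}{2}\right)-1 .
\]
So $x=iy$ is a zero iff $U_{2m}(y/2)=\tfrac{(-1)^m}{2}$. Here $U_{2m}$ is a real even polynomial of degree $2m$. The strategy is to exhibit $2m$ distinct real solutions $y\in(-2,2)$. Since $\deg\Le_{2m}=2m$, these then exhaust the zero set, and all zeros are simple.

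Set $y=2\cos\theta$ with $\theta\in(0,\pi)$. By \eqref{eq:chebyshev-trigonometric}, the equation becomes
\[
g(\theta):=\sin((2m+1)\theta)-\tfrac{(-1)^m}{2}\sin\theta=0,\qquad \theta\in(0,\pi).
\]
Consider the points $\theta_j=\frac{(2j+1)\pi}{2(2m+1)}$ for $j=0,\dots,2m$. At these points $\sin((2m+1)\theta_j)=(-1)^j$, and $0<\sin\theta_j\le 1$. Hence $g(\theta_j)=(-1)^j-\tfrac{(-1)^m}{2}\sin\theta_j$ has sign exactly $(-1)^j$. These are $2m+1$ points in $(0,\pi)$ with strictly alternating signs, so the intermediate value theorem gives at least $2m$ zeros of $g$ in $(\theta_0,\theta_{2m})\subset(0,\pi)$, one in each interval $(\theta_j,\theta_{j+1})$.

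Since $\theta\mapsto 2\cos\theta$ is injective on $(0,\pi)$, these give $2m$ distinct values $y\in(-2,2)$, hence $2m$ distinct zeros $iy$ of $\Le_{2m}$. The degree count then shows these are all the zeros, each simple. None is $0$, since $\Le_{2m}(0)=1$ by Theorem~\ref{thm:even-no-real}. Because $U_{2m}$ is even, the solution set in $y$ is symmetric under $y\mapsto-y$, so it has the form $\{\pm\mu_1,\dots,\pm\mu_m\}$ with $0<\mu_1<\dots<\mu_m<2$. A parity-symmetric count works as well, since $\theta\mapsto\pi-\theta$ preserves $g$. This gives the zero set $\{\pm i\mu_1,\ldots,\pm i\mu_m\}$.

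The main point needing care is the sign claim at the $\theta_j$, which rests on $\tfrac12\sin\theta_j<1$. That inequality holds because $|\lambda|=\tfrac12<1$, so the perturbation never cancels the extremal values of $\sin((2m+1)\theta)$. A secondary point is keeping every root strictly inside $(0,\pi)$, so that $\sin\theta\neq0$ and the trigonometric identity is valid; the choice of the $\theta_j$ guarantees this automatically.
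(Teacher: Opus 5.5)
Your proposal is correct and is essentially the paper's proof. It uses the same nodes $\theta_j=(j+\tfrac12)\pi/(2m+1)$, the same sign alternation from $\bigl|\tfrac{(-1)^m}{2}\sin\theta_j\bigr|<1$, the intermediate value theorem on $2m$ disjoint intervals, the degree count, and the evenness of $U_{2m}$ together with nonvanishing at $0$. The only cosmetic difference is that you apply the intermediate value theorem to $g(\theta)=\sin\theta\,\bigl(U_{2m}(\cos\theta)-\tfrac{(-1)^m}{2}\bigr)$ in the angle variable, whereas the paper applies it to $H_m(t)=U_{2m}(t)-\tfrac{(-1)^m}{2}$ at the points $t_j=\cos\theta_j$.
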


\begin{proof}
By~\eqref{eq:classification-chebyshev} and
\(i^{2m}=(-1)^m\), we have
\[
\Le_{2m}(x)
=
2(-1)^mU_{2m}\left(\frac{x}{2i}\right)-1.
\]
Thus, setting \(t=x/(2i)\), the equation \(\Le_{2m}(x)=0\)
is equivalent to
\[
U_{2m}(t)=\frac{(-1)^m}{2}.
\]
Define
\[
H_m(t)=U_{2m}(t)-\frac{(-1)^m}{2}.
\]
The zeros of \(H_m\) correspond to those of \(\Le_{2m}\)
under the transformation \(x=2it\).

For \(j=0,1,\ldots,2m\), let
\[
\theta_j=\frac{(j+\frac12)\pi}{2m+1},
\qquad
t_j=\cos\theta_j.
\]
Since \(0<\theta_0<\theta_1<\cdots<\theta_{2m}<\pi\)
and cosine is strictly decreasing on \((0,\pi)\), we have
\[
1>t_0>t_1>\cdots>t_{2m}>-1.
\]
Using the trigonometric representation of \(U_{2m}\) and
the equality \((2m+1)\theta_j=(j+\frac12)\pi\), we obtain
\[
U_{2m}(t_j)
=
\frac{\sin((2m+1)\theta_j)}{\sin\theta_j}
=
\frac{(-1)^j}{\sin\theta_j}.
\]
Since \(0<\theta_j<\pi\), we have \(0<\sin\theta_j\leq1\).
Consequently,
\[
|U_{2m}(t_j)|\geq1,
\qquad
\operatorname{sgn}(U_{2m}(t_j))=(-1)^j.
\]

The constant subtracted in the definition of \(H_m\) has
absolute value \(1/2\), which is smaller than
\(|U_{2m}(t_j)|\). Therefore, this subtraction does not
change the sign, and
\[
\operatorname{sgn}(H_m(t_j))=(-1)^j,
\qquad j=0,\ldots,2m.
\]
In particular,
\[
H_m(t_j)H_m(t_{j+1})<0,
\qquad j=0,\ldots,2m-1.
\]
By the Intermediate Value
Theorem~\cite[Theorem~5.3.7]{Bartle and Sherbert2011},
\(H_m\) has at least one real zero in each interval
\((t_{j+1},t_j)\), \(j=0,\ldots,2m-1\).
These \(2m\) intervals are pairwise disjoint and contained
in \((-1,1)\), so \(H_m\) has at least \(2m\) distinct
real zeros in \((-1,1)\).

Since \(H_m\) has degree \(2m\), there is exactly one zero
in each of these intervals, and these zeros exhaust its
zero set. Moreover, all of them are simple, since a
degree-\(2m\) polynomial with \(2m\) distinct zeros cannot
have a multiple zero.

The polynomial \(U_{2m}\) is even, so
\(H_m(-t)=H_m(t)\). Hence the zeros of \(H_m\) occur in
symmetric pairs. Furthermore, \(U_{2m}(0)=(-1)^m\) gives
\[
H_m(0)
=
(-1)^m-\frac{(-1)^m}{2}
=
\frac{(-1)^m}{2}
\neq0.
\]
Thus zero is not a root, and the zeros of \(H_m\) can be
written as
\[
\pm s_1,\ldots,\pm s_m,
\qquad
0<s_1<\cdots<s_m<1.
\]

Returning to \(x=2it\), the zeros of \(\Le_{2m}\) are
\(\pm2is_1,\ldots,\pm2is_m\).
Setting \(\mu_j=2s_j\), we obtain
\[
\operatorname{Zeros}(\Le_{2m})
=
\{\pm i\mu_1,\ldots,\pm i\mu_m\},
\qquad
0<\mu_1<\cdots<\mu_m<2.
\]
Therefore, all \(2m\) zeros of \(\Le_{2m}\) are distinct,
purely imaginary, with imaginary parts lying in \((-2,2)\).
Since \(\Le_{2m}\) has degree \(2m\), every zero is simple.
\end{proof}
\begin{corollary}
\label{cor:odd-remaining-zeros}
For every integer \(m\geq1\), the zeros of \(\Le_{2m+1}\), other than its
unique positive real zero, consist of \(2m\) nonreal zeros, counted with
multiplicity. These zeros occur in complex conjugate pairs, and each of
them has both nonzero real and nonzero imaginary parts.
\end{corollary}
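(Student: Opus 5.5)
The plan is to combine the degree count with the results already established for odd degree. By \eqref{eq:leonardo-explicit}, $\Le_{2m+1}$ has degree $2m+1$, with leading coefficient $2$, and its coefficients are real. The fundamental theorem of algebra then gives exactly $2m+1$ zeros in $\mathbb{C}$, counted with multiplicity. By Theorem~\ref{thm:odd-real} exactly one of them is real, namely $\xi_{2m+1}$, and it is simple. Removing it leaves $2m$ zeros, counted with multiplicity, and none of them is real. Because Theorem~\ref{thm:odd-real} says $\Le_{2m+1}$ has only one real zero at all, no remaining zero can be real, and none can coincide with $\xi_{2m+1}$ since that zero is simple.

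Next comes the conjugate-pair structure. Since the coefficients are real, $\overline{\Le_{2m+1}(z)}=\Le_{2m+1}(\bar z)$ for every $z$. Hence $z\mapsto\bar z$ permutes the zero set and preserves multiplicities. I would justify the multiplicity claim by conjugating the factorization $\Le_{2m+1}(x)=(x-z)^k g(x)$, which shows that $\bar z$ has the same multiplicity as $z$. A nonreal zero satisfies $z\neq\bar z$, so the $2m$ nonreal zeros split into $m$ conjugate pairs, with equal multiplicities inside each pair.

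Finally, I would check that both parts are nonzero. The imaginary part of each remaining zero is nonzero because the zero is nonreal, which was settled in the first step. The real part is nonzero because a zero with $\operatorname{Re}z=0$ and $\operatorname{Im}z\neq0$ would be purely imaginary, and Theorem~\ref{thm:odd-no-imaginary} excludes such zeros. The value $z=0$ is excluded as well, since it is real; in any case $\Le_{2m+1}(0)=-1$.

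I expect no real obstacle. The only delicate point is bookkeeping with multiplicity: the statement counts with multiplicity, and simplicity of these zeros is not claimed here. So I would phrase the pairing as a multiplicity-preserving involution and not argue about distinct roots.
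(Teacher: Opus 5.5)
Your proof is correct and takes essentially the same route as the paper. The paper combines the fundamental theorem of algebra with Theorem~\ref{thm:odd-real} to leave $2m$ nonreal zeros, uses real coefficients for the conjugate pairing, and uses Theorem~\ref{thm:odd-no-imaginary} to exclude zero real parts; your explicit check that conjugation preserves multiplicities is a slightly more careful version of the pairing step.
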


\begin{proof}
By Theorem~\ref{thm:odd-real}, the polynomial \(\Le_{2m+1}\)
has exactly one real zero, which is positive and simple.
Since its degree is \(2m+1\), the Fundamental Theorem of Algebra
implies that the remaining \(2m\) zeros are nonreal, counted
with multiplicity.

These zeros occur in complex conjugate pairs because
\(\Le_{2m+1}\) has real coefficients. Their imaginary parts
are nonzero since they are nonreal, and their real parts
are nonzero by Theorem~\ref{thm:odd-no-imaginary}.
\end{proof}
\section{A Fixed-Point Method for All Zeros}
\label{sec:fixed-point}

We use the principal branches of the inverse sine and square root;
for background on complex elementary functions,
see~\cite{Brown and Churchill2014}.
We construct all zeros of \(\Le_n\) using fixed-point
iterations based on the parametrization
\(x=2i\cos\theta\). Set
\[
\lambda_n=\frac{1}{2i^n},
\qquad
|\lambda_n|=\frac12.
\]
For each \(k\in\mathbb Z\), define
\begin{equation}
\label{eq:fp-map}
T_{n,k}(\theta)
=
\frac{
k\pi+(-1)^k\arcsin(\lambda_n\sin\theta)
}{n+1},
\end{equation}
whenever \(|\lambda_n\sin\theta|<1\).
The index \(k\) labels the solution families arising from the sine equation.

\begin{lemma}
\label{lem:fp-equivalence}
The points \(x=\pm2i\) are not zeros of \(\Le_n\).
For every \(\theta\in\mathbb C\) with \(\sin\theta\neq0\),
the equation \(\Le_n(2i\cos\theta)=0\) is equivalent to
\begin{equation}
\label{eq:fp-trig}
\sin((n+1)\theta)=\lambda_n\sin\theta.
\end{equation}
If, in addition, \(|\lambda_n\sin\theta|<1\), these
conditions are equivalent to
\[
\theta=T_{n,k}(\theta)
\]
for some \(k\in\mathbb Z\).
\end{lemma}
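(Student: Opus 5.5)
We treat the three assertions of Lemma~\ref{lem:fp-equivalence} in turn.

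\emph{The points $x=\pm2i$.} These correspond to $x/(2i)=\pm1$. By \eqref{eq:classification-chebyshev},
\[
\Le_n(\pm2i)=2i^nU_n(\pm1)-1=2i^n(\pm1)^n(n+1)-1 .
\]
The value $2i^n(\pm1)^n(n+1)$ has modulus $2(n+1)\ge2$. A complex number of modulus at least $2$ cannot equal $1$, so $\Le_n(\pm2i)\neq0$. (When $n$ is odd it is purely imaginary, which gives the conclusion even more directly.)

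\emph{Equivalence with the trigonometric equation.} Let $\sin\theta\neq0$. The identity $F_{n+1}(2i\cos\theta)=i^n\sin((n+1)\theta)/\sin\theta$ from Section~\ref{sec:preliminaries} holds for all complex $\theta$ with $\sin\theta\ne0$. Both sides are entire in $\theta$ after multiplying by $\sin\theta$, and $\sin\theta\,U_n(\cos\theta)=\sin((n+1)\theta)$ is a polynomial identity in $e^{i\theta}$. Hence
\[
\Le_n(2i\cos\theta)=0 \iff F_{n+1}(2i\cos\theta)=\tfrac12 \iff i^n\sin((n+1)\theta)=\tfrac12\sin\theta .
\]
Since $\lambda_n=1/(2i^n)$, the last equation is exactly \eqref{eq:fp-trig}.

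\emph{Equivalence with the fixed-point equations.} Assume in addition $|\lambda_n\sin\theta|<1$, and set $w=\lambda_n\sin\theta$ and $\psi=\arcsin w$ (principal branch). Then $\sin\psi=w$, and $\arcsin$ is well defined since $w\ne\pm1$. The key fact is the complete solution set of the complex sine equation: $\sin z=\sin\psi$ if and only if $z=k\pi+(-1)^k\psi$ for some $k\in\mathbb Z$. This follows from
\[
\sin z-\sin\psi=2\cos\tfrac{z+\psi}{2}\sin\tfrac{z-\psi}{2},
\]
using that the complex zeros of $\sin$ and $\cos$ are only the real ones $m\pi$ and $\pi/2+m\pi$. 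Applying this with $z=(n+1)\theta$ shows that \eqref{eq:fp-trig} is equivalent to
\[
(n+1)\theta=k\pi+(-1)^k\arcsin(\lambda_n\sin\theta)
\]
for some integer $k$. Dividing by $n+1$ gives $\theta=T_{n,k}(\theta)$. The converse direction is immediate: applying $\sin$ to $(n+1)T_{n,k}(\theta)$ gives $\sin((n+1)\theta)=(-1)^{2k}\sin\psi\cdot$(sign check) $=\lambda_n\sin\theta$.

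The main point needing care is this complete description of solutions of $\sin z=c$ over $\mathbb C$, i.e.\ confirming that the sine and cosine have no nonreal zeros. It follows from writing
\[
\sin(a+ib)=\sin a\cosh b+i\cos a\sinh b .
\]
For this to vanish we need $\sin a\cosh b=0$, which forces $\sin a=0$ since $\cosh b\ge1$. Then $|\cos a|=1$, so $\sinh b=0$ and hence $b=0$. The statement for $\cos$ follows from $\cos z=\sin(z+\pi/2)$.
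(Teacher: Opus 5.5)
Your proof is correct and follows the paper's argument. You evaluate $\Le_n(\pm2i)$ via $U_n(\pm1)$, reduce $\Le_n(2i\cos\theta)=0$ to $\sin((n+1)\theta)=\lambda_n\sin\theta$ through the Chebyshev/trigonometric representation, and obtain all solutions of $\sin z=\sin\psi$ from the factorization $2\cos\frac{z+\psi}{2}\sin\frac{z-\psi}{2}$; you also justify two points the paper uses tacitly (that the trigonometric identity holds for complex $\theta$, and that $\sin$ and $\cos$ have only real zeros). The stray ``(sign check)'' in your converse should read $\sin(k\pi+(-1)^k\psi)=\cos(k\pi)\sin((-1)^k\psi)=\sin\psi$, although that direction is already covered by your ``if and only if'' description of the solution set.
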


\begin{proof}
We first exclude \(x=\pm2i\), which correspond to
\(\sin\theta=0\) under \(x=2i\cos\theta\).
Using \(U_n(1)=n+1\) and \(U_n(-1)=(-1)^n(n+1)\)
in~\eqref{eq:classification-chebyshev}, we obtain
\[
\Le_n(\pm2i)
=
2(\pm i)^n(n+1)-1
\neq0,
\]
since \(\left|2(\pm i)^n(n+1)\right|=2(n+1)>1\).

Now let \(\theta\in\mathbb C\) satisfy \(\sin\theta\neq0\).
Combining~\eqref{eq:classification-chebyshev}
and~\eqref{eq:chebyshev-trigonometric} gives
\begin{equation}
\label{eq:fp-trigonometric-representation}
\Le_n(2i\cos\theta)
=
2i^n\frac{\sin((n+1)\theta)}{\sin\theta}-1.
\end{equation}
Since \(\sin\theta\neq0\), setting this expression equal
to zero gives~\eqref{eq:fp-trig}, and the converse follows
by substitution.

Assume further that \(|\lambda_n\sin\theta|<1\), and set
\(w=\lambda_n\sin\theta\) and \(\alpha=\arcsin w\).
Then \(\sin\alpha=w\), so the equation \(\sin z=w\)
is equivalent to
\[
0=\sin z-\sin\alpha
=
2\cos\left(\frac{z+\alpha}{2}\right)
\sin\left(\frac{z-\alpha}{2}\right).
\]
Thus either \(\sin((z-\alpha)/2)=0\) or
\(\cos((z+\alpha)/2)=0\). The first case gives
\((z-\alpha)/2=j\pi\), while the second gives
\((z+\alpha)/2=\pi/2+j\pi\). Hence all solutions are
\[
z=2j\pi+\alpha
\quad\text{or}\quad
z=(2j+1)\pi-\alpha,
\qquad j\in\mathbb Z.
\]
These two families can be written in the single form
\[
z=k\pi+(-1)^k\arcsin w,
\qquad k\in\mathbb Z.
\]

If~\eqref{eq:fp-trig} holds, applying this representation
with \(z=(n+1)\theta\) and \(w=\lambda_n\sin\theta\) gives
\[
(n+1)\theta
=
k\pi+(-1)^k\arcsin(\lambda_n\sin\theta)
\]
for some \(k\in\mathbb Z\). Dividing by \(n+1\) yields
\(\theta=T_{n,k}(\theta)\).

Conversely, if \(\theta=T_{n,k}(\theta)\), taking the sine
of both sides of
\[
(n+1)\theta
=
k\pi+(-1)^k\arcsin(\lambda_n\sin\theta)
\]
and using \(\sin(k\pi+(-1)^k\alpha)=\sin\alpha\)
recovers~\eqref{eq:fp-trig}. This completes the proof.
\end{proof}
\begin{theorem}
\label{thm:fp-contraction}
For $k=1,\ldots,n$, let
\[
D_{n,k}
=
\left\{
\theta\in\mathbb{C}:
\left|\theta-\frac{k\pi}{n+1}\right|
\leq\frac{1}{n+1}
\right\}.
\]
Then $T_{n,k}$ is well defined on $D_{n,k}$ and maps this disk into itself. Moreover,
\begin{equation}
\label{eq:fp-lipschitz}
|T_{n,k}(\theta)-T_{n,k}(\eta)|
\leq
\kappa_n|\theta-\eta|,
\qquad \theta,\eta\in D_{n,k},
\end{equation}
where
\[
\kappa_n
=
\frac{\sqrt{61}}{8(n+1)}
<1.
\]
\end{theorem}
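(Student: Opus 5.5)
The plan is to write every point of the disk as $\theta=c+h$, where $c=k\pi/(n+1)\in(0,\pi)$ is real and $|h|\le 1/(n+1)\le 1/2$ (since $n\ge1$). In particular $|\operatorname{Im}\theta|\le 1/2$. Every estimate will then follow from the elementary identities
\[
|\sin(a+ib)|^2=\sin^2a+\sinh^2b\le\cosh^2 b,\qquad
|\cos(a+ib)|^2=\cos^2a+\sinh^2b\le\cosh^2 b .
\]
For $\theta\in D_{n,k}$ they give $|\sin\theta|,|\cos\theta|\le\cosh(1/2)<1.13$. With $w=\lambda_n\sin\theta$ this yields $|w|\le \tfrac12\cosh(1/2)<0.565<1$. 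So $w$ stays in the open unit disk, where the principal $\arcsin$ and $\sqrt{1-w^2}$ are holomorphic. Hence $T_{n,k}$ is well defined and holomorphic on a neighbourhood of $D_{n,k}$.

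\emph{Self-mapping.} We have $T_{n,k}(\theta)-c=(-1)^k\arcsin(w)/(n+1)$, so it suffices to show $|\arcsin w|\le 1$ whenever $|w|\le 0.565$. I would use the Maclaurin series $\arcsin w=\sum_{j\ge0}\frac{(2j)!}{4^j(j!)^2(2j+1)}w^{2j+1}$. All its coefficients are positive, so $|\arcsin w|\le\arcsin|w|\le\arcsin(0.565)<0.61<1$. Thus $|T_{n,k}(\theta)-c|<1/(n+1)$, and $T_{n,k}(D_{n,k})\subset D_{n,k}$.

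\emph{Lipschitz bound.} Differentiating gives
\[
T_{n,k}'(\theta)=\frac{(-1)^k\lambda_n\cos\theta}{(n+1)\sqrt{1-\lambda_n^2\sin^2\theta}} .
\]
I would bound the numerator by $\tfrac12\cosh(1/2)$. For the denominator I would use $|1-w^2|\ge1-|w|^2\ge 1-\tfrac14\cosh^2(1/2)$. Together these give
\[
|T_{n,k}'(\theta)|\le\frac{1}{n+1}\cdot\frac{\cosh(1/2)}{\sqrt{4-\cosh^2(1/2)}}<\frac{0.69}{n+1}.
\]
This is below $\sqrt{61}/(8(n+1))\approx 0.976/(n+1)$. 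Since $D_{n,k}$ is convex, integrating $T_{n,k}'$ along the segment from $\eta$ to $\theta$ gives \eqref{eq:fp-lipschitz} with $\kappa_n$. Finally, $\kappa_n\le\sqrt{61}/16<1$ for $n\ge1$.

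The main obstacle I expect is matching the exact constant $\sqrt{61}/8$. It presumably comes from a cruder chain of bounds, for instance replacing $\cosh(1/2)$ by a simple rational upper bound and squaring. Since my estimate is sharper, I would simply present the cleaner bound and note that it implies the stated one. I would also check the endpoint case $n=1$ carefully, because there the radius $1/2$ makes the imaginary part largest, so it is the worst case for all numerical constants. For $n\ge2$ the same bounds hold with room to spare, since the radius $1/(n+1)$ is smaller.
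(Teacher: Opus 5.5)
Your proof is correct and follows the same route as the paper. Both bound $|\lambda_n\sin\theta|$ using $|\operatorname{Im}\theta|\le 1/2$ and $|\sin(u+iv)|\le\cosh v$, bound $|\arcsin w|$ to get the self-map, and then bound $|T_{n,k}'|$ and integrate along segments of the convex disk. Your guess about the constant is right: the paper uses $\cosh(1/2)\le 7/6$, the rounded bound $\rho=3/5$, $|\arcsin w|\le |w|/\sqrt{1-|w|^2}$, and the cruder $|\cos\theta|^2\le 1+|\sin\theta|^2\le 1+4\rho^2$, which gives exactly $\sqrt{61}/8$. Your direct $\cosh$ bound on $|\cos\theta|$ and the positive-coefficient series for $\arcsin$ are sharper, but you should justify decimal claims such as $\cosh(1/2)<1.13$; the paper's bound $\cosh(1/2)\le 7/6$ suffices and still gives $7/\sqrt{95}<\sqrt{61}/8$.
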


\begin{proof}
Fix $k\in\{1,\ldots,n\}$ and write $\theta=u+iv\in D_{n,k}$, where $u,v\in\mathbb{R}$. Since the center of the disk is real,
\[
|v|\leq\frac{1}{n+1}\leq\frac12.
\]
The complex sine satisfies
\[
|\sin(u+iv)|^2
=
\sin^2u+\sinh^2v
\leq
\cosh^2v.
\]
Hence
\[
|\lambda_n\sin\theta|
\leq
\frac12\cosh\!\left(\frac{1}{n+1}\right)
\leq
\frac12\cosh\!\left(\frac12\right).
\]
Also,
\[
\cosh\!\left(\frac12\right)
=
1+\sum_{j=1}^{\infty}\frac{1}{4^j(2j)!}
\leq
1+\frac12\sum_{j=1}^{\infty}\frac{1}{4^j}
=
\frac76.
\]
It follows that
\begin{equation}
\label{eq:fp-argument-bound}
|\lambda_n\sin\theta|
\leq
\frac{7}{12}
<
\frac35
<
1.
\end{equation}
Thus, the principal inverse sine in~\eqref{eq:fp-map} is well defined throughout $D_{n,k}$. We use the common bound $\rho=3/5$ below.

For $|w|<1$, integration along the segment from $0$ to $w$ gives
\[
\arcsin w
=
w\int_0^1
\frac{dt}{\sqrt{1-t^2w^2}}.
\]
Since
\[
\left|\sqrt{1-t^2w^2}\right|
\geq
\sqrt{1-t^2|w|^2},
\]
we obtain
\[
|\arcsin w|
\leq
|w|\int_0^1
\frac{dt}{\sqrt{1-t^2|w|^2}}
\leq
\frac{|w|}{\sqrt{1-|w|^2}}.
\]
In particular, $|w|\leq3/5$ implies $|\arcsin w|\leq3/4$. Therefore,
\[
\left|
T_{n,k}(\theta)-\frac{k\pi}{n+1}
\right|
\leq
\frac{3}{4(n+1)}
<
\frac{1}{n+1},
\qquad \theta\in D_{n,k},
\]
which proves $T_{n,k}(D_{n,k})\subseteq D_{n,k}$.

The strict bound in~\eqref{eq:fp-argument-bound} also shows that $T_{n,k}$ is analytic on an open set containing $D_{n,k}$. Differentiating gives
\begin{equation}
\label{eq:fp-derivative}
T_{n,k}'(\theta)
=
\frac{(-1)^k}{n+1}
\frac{\lambda_n\cos\theta}
{\sqrt{1-\lambda_n^2\sin^2\theta}}.
\end{equation}
From $|\lambda_n\sin\theta|\leq\rho$ and $|\lambda_n|=1/2$, we have $|\sin\theta|\leq2\rho$. Consequently,
\[
|\cos\theta|^2
=
|1-\sin^2\theta|
\leq
1+|\sin\theta|^2
\leq
1+4\rho^2.
\]
Furthermore,
\[
\left|
\sqrt{1-\lambda_n^2\sin^2\theta}
\right|
\geq
\sqrt{1-|\lambda_n\sin\theta|^2}
\geq
\sqrt{1-\rho^2}.
\]
Applying these estimates to~\eqref{eq:fp-derivative}, with $\rho=3/5$, gives
\[
|T_{n,k}'(\theta)|
\leq
\frac{\sqrt{1+4\rho^2}}
{2(n+1)\sqrt{1-\rho^2}}
=
\frac{\sqrt{61}}{8(n+1)}
=
\kappa_n.
\]
Since $n\geq1$, we have $\kappa_n\leq\sqrt{61}/16<1$.

The disk $D_{n,k}$ is convex. Hence, for any $\theta,\eta\in D_{n,k}$, the segment joining them remains in $D_{n,k}$, and
\[
T_{n,k}(\theta)-T_{n,k}(\eta)
=
(\theta-\eta)
\int_0^1
T_{n,k}'\bigl(\eta+t(\theta-\eta)\bigr)\,dt.
\]
Taking absolute values and applying the derivative bound proves~\eqref{eq:fp-lipschitz}.
\end{proof}

\begin{theorem}
\label{thm:fp-convergence}
For every \(k=1,\ldots,n\), the map \(T_{n,k}\) has a unique
fixed point \(\theta_*^{(k)}\) in \(D_{n,k}\).
For every initial value \(\theta_0^{(k)}\in D_{n,k}\), the iteration
\begin{equation}
\label{eq:fp-iteration}
\theta_{r+1}^{(k)}
=
T_{n,k}\bigl(\theta_r^{(k)}\bigr),
\qquad r=0,1,2,\ldots,
\end{equation}
converges geometrically to \(\theta_*^{(k)}\) and satisfies
\begin{equation}
\label{eq:fp-error}
\left|
\theta_r^{(k)}-\theta_*^{(k)}
\right|
\leq
\frac{\kappa_n^r}{1-\kappa_n}
\left|
\theta_1^{(k)}-\theta_0^{(k)}
\right|,
\qquad r\geq0.
\end{equation}
In particular, the initial values
\[
\theta_0^{(k)}=\frac{k\pi}{n+1},
\qquad k=1,\ldots,n,
\]
are admissible for every \(n\geq1\).
\end{theorem}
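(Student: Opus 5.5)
The plan is to apply Banach's fixed-point theorem directly, using Theorem~\ref{thm:fp-contraction} as the only substantive input. First, $D_{n,k}$ is a closed disk in $\mathbb{C}$, so it is a complete metric space under the Euclidean distance. By Theorem~\ref{thm:fp-contraction}, $T_{n,k}$ is well defined on $D_{n,k}$ and satisfies $T_{n,k}(D_{n,k})\subseteq D_{n,k}$. The same theorem gives the Lipschitz bound \eqref{eq:fp-lipschitz} with constant $\kappa_n=\sqrt{61}/(8(n+1))<1$. Hence $T_{n,k}$ is a contraction of $D_{n,k}$ into itself, and Banach's theorem yields a unique fixed point $\theta_*^{(k)}\in D_{n,k}$. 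It also shows that every orbit starting in $D_{n,k}$ converges to it.

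For the error estimate \eqref{eq:fp-error} I would give the short standard argument rather than only cite it. Since $D_{n,k}$ is invariant, induction shows that every iterate $\theta_r^{(k)}$ stays in $D_{n,k}$. Applying \eqref{eq:fp-lipschitz} repeatedly gives
\[
\bigl|\theta_{j+1}^{(k)}-\theta_j^{(k)}\bigr|\leq\kappa_n^j\bigl|\theta_1^{(k)}-\theta_0^{(k)}\bigr|.
\]
For $s>r$, the triangle inequality and the geometric series then give
\[
\bigl|\theta_s^{(k)}-\theta_r^{(k)}\bigr|\leq\frac{\kappa_n^r}{1-\kappa_n}\bigl|\theta_1^{(k)}-\theta_0^{(k)}\bigr|.
\]
This shows the sequence is Cauchy, so it has a limit, and the limit lies in the closed set $D_{n,k}$. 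Because $T_{n,k}$ is continuous, being Lipschitz, the limit is a fixed point, and by uniqueness it equals $\theta_*^{(k)}$. Letting $s\to\infty$ in the last inequality gives \eqref{eq:fp-error}. Geometric convergence follows because the right-hand side decays like $\kappa_n^r$.

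The admissibility of $\theta_0^{(k)}=k\pi/(n+1)$ is immediate, since it is the center of $D_{n,k}$ and so lies in the disk for every $n\geq1$ and $k=1,\ldots,n$. I do not expect a genuine obstacle here, because the analytic work (invariance and the derivative bound) was done in Theorem~\ref{thm:fp-contraction}. The only point needing care is checking that every iterate remains in $D_{n,k}$, so that $T_{n,k}$, defined with the principal $\arcsin$, is always evaluated where it is defined and where the Lipschitz bound holds. The invariance part of Theorem~\ref{thm:fp-contraction} settles this.
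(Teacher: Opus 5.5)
Your proposal is correct and follows the paper's proof essentially step for step: completeness of the closed disk, invariance and the contraction constant from Theorem~\ref{thm:fp-contraction}, Banach's theorem, the telescoping geometric-series bound for \eqref{eq:fp-error}, and admissibility of the centers. The only cosmetic difference is that you derive convergence yourself via the Cauchy estimate and then let $s\to\infty$, whereas the paper takes convergence from Banach's theorem and sums the tail $\sum_{j\geq r}\bigl|\theta_{j+1}^{(k)}-\theta_j^{(k)}\bigr|$ directly.
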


\begin{proof}
Each $D_{n,k}$ is a nonempty closed subset of $\mathbb{C}$ and is therefore complete in the usual metric. Theorem~\ref{thm:fp-contraction} shows that $T_{n,k}$ maps $D_{n,k}$ into itself and is a contraction with constant $\kappa_n<1$. Banach's fixed-point theorem~\cite{Berinde2007} gives the unique fixed point and convergence from every initial value in the disk.

Repeated application of~\eqref{eq:fp-lipschitz} yields
\[
\left|
\theta_{j+1}^{(k)}-\theta_j^{(k)}
\right|
\leq
\kappa_n^j
\left|
\theta_1^{(k)}-\theta_0^{(k)}
\right|,
\qquad j\geq0.
\]
Thus,
\[
\begin{aligned}
\left|
\theta_r^{(k)}-\theta_*^{(k)}
\right|
&\leq
\sum_{j=r}^{\infty}
\left|
\theta_{j+1}^{(k)}-\theta_j^{(k)}
\right|\\
&\leq
\sum_{j=r}^{\infty}
\kappa_n^j
\left|
\theta_1^{(k)}-\theta_0^{(k)}
\right|\\
&=
\frac{\kappa_n^r}{1-\kappa_n}
\left|
\theta_1^{(k)}-\theta_0^{(k)}
\right|.
\end{aligned}
\]
This proves~\eqref{eq:fp-error} and geometric convergence. The prescribed initial values are the centers of the disks and hence belong to them.
\end{proof}
\begin{theorem}
\label{thm:fp-all-zeros}
Let \(n\geq1\). For each \(k=1,\ldots,n\), let
\(\theta_*^{(k)}\) denote the unique fixed point of \(T_{n,k}\)
in \(D_{n,k}\), given by Theorem~\ref{thm:fp-convergence}.
Then the numbers
\[
x_k=2i\cos\theta_*^{(k)},
\qquad k=1,\ldots,n,
\]
are pairwise distinct and constitute all zeros of \(\Le_n\).
In particular, every zero of \(\Le_n\) is simple.

Moreover, for any initial values
\(\theta_0^{(k)}\in D_{n,k}\), the iterations defined
by~\eqref{eq:fp-iteration} yield the exact limit representations
\[
x_k
=
\lim_{r\to\infty}2i\cos\theta_r^{(k)},
\qquad k=1,\ldots,n.
\]
\end{theorem}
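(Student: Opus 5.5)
The plan has three steps: show each $x_k$ is a zero, show the $x_k$ are pairwise distinct, and then count degrees and pass to the limit.

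\emph{Step 1: each $x_k$ is a zero.} For each $k$, Theorem~\ref{thm:fp-convergence} gives $\theta_*^{(k)}=T_{n,k}(\theta_*^{(k)})$ with $\theta_*^{(k)}\in D_{n,k}$. The proof of Theorem~\ref{thm:fp-contraction} shows $|\lambda_n\sin\theta|\le 7/12<1$ on $D_{n,k}$. To invoke the converse direction of Lemma~\ref{lem:fp-equivalence}, I also need $\sin\theta_*^{(k)}\neq0$. The zeros of $\sin$ are $j\pi$, and the disk $D_{n,k}$ is centered at $k\pi/(n+1)\in(0,\pi)$. Its distance to $0$ is at least $\pi/(n+1)$, and its distance to $\pi$ is at least $\pi/(n+1)$ when $k\le n$; both exceed the radius $1/(n+1)$. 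So $D_{n,k}$ contains no multiple of $\pi$, and $\sin\theta_*^{(k)}\ne0$. Lemma~\ref{lem:fp-equivalence} then gives $\Le_n(2i\cos\theta_*^{(k)})=0$, that is, $\Le_n(x_k)=0$.

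\emph{Step 2: the $x_k$ are pairwise distinct.} This is where I expect the main obstacle, and I would handle it by injectivity of $\cos$. Suppose $\cos\theta_*^{(j)}=\cos\theta_*^{(k)}$ with $j\neq k$. Then $\theta_*^{(j)}=\pm\theta_*^{(k)}+2\pi m$ for some integer $m$. Every point of $\bigcup_k D_{n,k}$ has real part in the interval
\[
\Bigl(\tfrac{\pi-1}{n+1},\ \pi-\tfrac{\pi-1}{n+1}\Bigr)\subset(0,\pi).
\]
\begin{itemize}
\item In the $+$ case, the real parts differ by $2\pi m$ while both lie in $(0,\pi)$. This forces $m=0$, so $\theta_*^{(j)}=\theta_*^{(k)}$.
\item In the $-$ case, the real parts satisfy $\operatorname{Re}\theta_*^{(j)}+\operatorname{Re}\theta_*^{(k)}=2\pi m$, with the sum lying in $(0,2\pi)$. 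No integer $m$ satisfies this.
\end{itemize}
It remains to show the disks are pairwise disjoint. Their centers are spaced $\pi/(n+1)$ apart, which exceeds the sum of the radii, $2/(n+1)$, since $\pi>2$. Hence $\theta_*^{(j)}=\theta_*^{(k)}$ is impossible for $j\ne k$, and $x_j\ne x_k$.

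\emph{Step 3: counting and the limit representation.} The polynomial $\Le_n$ has degree exactly $n$, since its leading coefficient is $2$ by \eqref{eq:leonardo-explicit}. Step~2 exhibits $n$ distinct zeros, so these are all of its zeros, and each is simple. Finally, $\theta_r^{(k)}\to\theta_*^{(k)}$ by Theorem~\ref{thm:fp-convergence}, and $\cos$ is continuous on $\mathbb C$. Therefore $2i\cos\theta_r^{(k)}\to x_k$, which gives the stated limit representation.
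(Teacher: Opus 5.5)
Your proposal is correct and follows essentially the same route as the paper. The fixed points lie in the strip $0<\operatorname{Re}\theta<\pi$, so $\sin\theta_*^{(k)}\neq0$ and Lemma~\ref{lem:fp-equivalence} applies; the disks are pairwise disjoint since $\pi/(n+1)>2/(n+1)$; cosine is injective on that strip; and a degree count plus continuity of $\cos$ finishes. One cosmetic point: because the disks are closed, the real parts lie in the closed interval $\bigl[\tfrac{\pi-1}{n+1},\,\pi-\tfrac{\pi-1}{n+1}\bigr]$ rather than the open one you wrote, but this interval is still inside $(0,\pi)$, so nothing changes.
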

\begin{proof}
For each \(k=1,\ldots,n\), the inclusion
\(\theta_*^{(k)}\in D_{n,k}\) gives
\[
0<\frac{k\pi-1}{n+1}
\leq \operatorname{Re}\theta_*^{(k)}
\leq \frac{k\pi+1}{n+1}<\pi.
\]
Thus all fixed points belong to
\[
S=\{\theta\in\mathbb C:0<\operatorname{Re}\theta<\pi\},
\]
and \(\sin\theta_*^{(k)}\neq0\).
By Theorem~\ref{thm:fp-contraction},
\(\lvert\lambda_n\sin\theta_*^{(k)}\rvert<1\).
Hence Lemma~\ref{lem:fp-equivalence} implies
\[
\Le_n\bigl(2i\cos\theta_*^{(k)}\bigr)=0.
\]

The disks \(D_{n,k}\) are pairwise disjoint, since the
distance between any two distinct centers is at least
\(\pi/(n+1)>2/(n+1)\), the sum of their radii.
Therefore, their fixed points are pairwise distinct.

Moreover, cosine is injective on \(S\).
Indeed, if \(\theta,\eta\in S\) and
\(\cos\theta=\cos\eta\), then
\[
\theta-\eta=2j\pi
\quad\text{or}\quad
\theta+\eta=2j\pi
\qquad (j\in\mathbb Z).
\]
The second possibility is excluded by
\(0<\operatorname{Re}(\theta+\eta)<2\pi\),
while the first forces \(j=0\), since
\(-\pi<\operatorname{Re}(\theta-\eta)<\pi\).
Thus \(\theta=\eta\).

Consequently, the numbers
\(x_k=2i\cos\theta_*^{(k)}\) are \(n\) distinct zeros
of the degree-\(n\) polynomial \(\Le_n\).
They therefore constitute its complete zero set,
and every zero is simple.

Finally, the limit representations follow from
Theorem~\ref{thm:fp-convergence} and the continuity
of the cosine function.
\end{proof}
\begin{remark}
The simplicity of the zeros of $\Le_n$ also follows from
its irreducibility over $\mathbb{Q}$, established by
Prasad and Kumari~\cite{PrasadKumari2025}, since
$\mathbb{Q}$ has characteristic zero.
The argument above provides an independent constructive
proof by obtaining all $n$ zeros as pairwise distinct
limits of fixed-point iterations on explicit invariant disks.
\end{remark}
\begin{proposition}
\label{prop:fp-symmetry}
Let \(n\geq1\), and let
\[
\theta_*^{(k)}\in D_{n,k},
\qquad
x_k=2i\cos\theta_*^{(k)},
\qquad k=1,\ldots,n,
\]
be the fixed points and zeros given by
Theorem~\ref{thm:fp-all-zeros}. Then
\begin{equation}
\label{eq:fp-branch-symmetry}
\theta_*^{(n+1-k)}
=
\pi-\overline{\theta_*^{(k)}},
\qquad
x_{n+1-k}
=
\overline{x_k},
\qquad k=1,\ldots,n.
\end{equation}
In particular, if \(n\) is odd, the branch corresponding to
\[
k=\frac{n+1}{2}
\]
yields the unique positive real zero:
\[
x_{(n+1)/2}=\xi_n.
\]
If \(n\) is even, then
\[
x_{n+1-k}=-x_k,
\qquad k=1,\ldots,n.
\]
\end{proposition}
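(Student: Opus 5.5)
The plan is to exploit the uniqueness of fixed points from Theorem~\ref{thm:fp-convergence}. The idea is to show that the reflection \(\sigma(\theta)=\pi-\overline{\theta}\) conjugates \(T_{n,k}\) into \(T_{n,n+1-k}\) and maps \(D_{n,k}\) onto \(D_{n,n+1-k}\).

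The disk part is immediate. The center \(k\pi/(n+1)\) is real and is sent to \(\pi-k\pi/(n+1)=(n+1-k)\pi/(n+1)\). Moreover, \(\sigma\) is an isometry, so \(\sigma(D_{n,k})=D_{n,n+1-k}\).

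For the map identity, I will use three facts:
\begin{itemize}
\item \(\sin(\pi-\overline{\theta})=\overline{\sin\theta}\);
\item \(\overline{\lambda_n}=(-1)^n\lambda_n\), since \(\overline{i^{-n}}=i^n=(-1)^n i^{-n}\);
\item the principal arcsine satisfies \(\arcsin\overline{w}=\overline{\arcsin w}\) and \(\arcsin(-w)=-\arcsin w\) for \(|w|<1\).
\end{itemize}
Together these give
\[
\arcsin\bigl(\lambda_n\sin\sigma(\theta)\bigr)=\arcsin\bigl((-1)^n\overline{\lambda_n\sin\theta}\bigr)=(-1)^n\,\overline{\arcsin(\lambda_n\sin\theta)}.
\]
The second equality uses oddness and conjugation symmetry. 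Hence
\[
T_{n,n+1-k}(\sigma(\theta))=\frac{(n+1-k)\pi+(-1)^{n+1-k}(-1)^n\,\overline{\arcsin(\lambda_n\sin\theta)}}{n+1}.
\]
Since \((-1)^{n+1-k}(-1)^n=-(-1)^k\), the right-hand side equals
\[
\pi-\overline{T_{n,k}(\theta)}=\sigma\bigl(T_{n,k}(\theta)\bigr).
\]
So if \(\theta_*^{(k)}\) is the fixed point in \(D_{n,k}\), then \(\sigma(\theta_*^{(k)})\) is a fixed point of \(T_{n,n+1-k}\) in \(D_{n,n+1-k}\). By uniqueness it equals \(\theta_*^{(n+1-k)}\). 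Then
\[
x_{n+1-k}=2i\cos(\pi-\overline{\theta_*^{(k)}})=-2i\,\overline{\cos\theta_*^{(k)}}=\overline{2i\cos\theta_*^{(k)}}=\overline{x_k},
\]
which proves \eqref{eq:fp-branch-symmetry}.

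For odd \(n\) and \(k=(n+1)/2\), the relation gives \(x_k=\overline{x_k}\), so \(x_k\) is real. By Theorem~\ref{thm:odd-real} the only real zero is \(\xi_n\), hence \(x_{(n+1)/2}=\xi_n\).

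For even \(n\), the polynomial \(\Le_n\) is even by \eqref{eq:leonardo-explicit}. By Theorem~\ref{thm:even-all-imaginary} every zero is purely imaginary, so \(\overline{x_k}=-x_k\) and therefore \(x_{n+1-k}=-x_k\).

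The only delicate point is the conjugation and oddness properties of the principal arcsine on the unit disk. These follow from the power series of \(\arcsin\), which has real coefficients, is odd, and converges for \(|w|<1\). That region covers our arguments because \(|\lambda_n\sin\theta|\le 3/5\) by Theorem~\ref{thm:fp-contraction}. The parity bookkeeping of the signs is the other place to take care.
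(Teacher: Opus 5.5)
Your proposal is correct and follows essentially the same argument as the paper. The reflection $\theta\mapsto\pi-\overline{\theta}$ maps $D_{n,k}$ onto $D_{n,n+1-k}$ and intertwines $T_{n,k}$ with $T_{n,n+1-k}$, uniqueness of the fixed point gives the branch symmetry, and the odd and even cases then follow from Theorems~\ref{thm:odd-real} and~\ref{thm:even-all-imaginary}; your power-series justification of the oddness and conjugation symmetry of the principal arcsine on the unit disk is a small detail the paper leaves implicit.
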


\begin{proof}
Fix \(k\in\{1,\ldots,n\}\), set \(j=n+1-k\), and define
\[
\mathcal{R}(\theta)=\pi-\overline{\theta}.
\]
Since
\[
\left|
\mathcal{R}(\theta)-\frac{j\pi}{n+1}
\right|
=
\left|
\theta-\frac{k\pi}{n+1}
\right|,
\]
we have
\[
\mathcal{R}(D_{n,k})=D_{n,j}.
\]

On the unit disk, the principal inverse sine is odd and
commutes with complex conjugation:
\[
\arcsin(-w)=-\arcsin w,
\qquad
\arcsin(\overline{w})
=
\overline{\arcsin w},
\qquad |w|<1.
\]
Moreover,
\[
\sin(\pi-\overline{\theta})
=
\overline{\sin\theta},
\qquad
\overline{\lambda_n}=(-1)^n\lambda_n.
\]
The bound established in
Theorem~\ref{thm:fp-contraction} therefore gives, for every
\(\theta\in D_{n,k}\),
\[
\arcsin\!\left(
\lambda_n\sin(\pi-\overline{\theta})
\right)
=
(-1)^n
\overline{
\arcsin(\lambda_n\sin\theta)
}.
\]
Since \(j=n+1-k\) and
\((-1)^{j+n}=-(-1)^k\), it follows that
\begin{equation}
\label{eq:fp-map-symmetry}
T_{n,j}(\pi-\overline{\theta})
=
\pi-\overline{T_{n,k}(\theta)}.
\end{equation}

Applying~\eqref{eq:fp-map-symmetry} to the fixed point
\(\theta_*^{(k)}\), we obtain
\[
T_{n,j}\!\left(
\pi-\overline{\theta_*^{(k)}}
\right)
=
\pi-\overline{\theta_*^{(k)}}.
\]
Thus \(\pi-\overline{\theta_*^{(k)}}\) is a fixed point of
\(T_{n,j}\) in \(D_{n,j}\). By the uniqueness established in
Theorem~\ref{thm:fp-convergence},
\[
\theta_*^{(j)}
=
\pi-\overline{\theta_*^{(k)}}.
\]
Consequently,
\[
\begin{aligned}
x_j
&=
2i\cos\!\left(
\pi-\overline{\theta_*^{(k)}}
\right)\\
&=
-2i\,\overline{\cos\theta_*^{(k)}}\\
&=
\overline{x_k}.
\end{aligned}
\]
This proves~\eqref{eq:fp-branch-symmetry}.

If \(n\) is odd, taking \(k=(n+1)/2\) gives
\[
x_{(n+1)/2}
=
\overline{x_{(n+1)/2}},
\]
so this zero is real. By Theorem~\ref{thm:odd-real}, it is
the unique positive real zero \(\xi_n\).

If \(n\) is even, all zeros are purely imaginary by
Theorem~\ref{thm:even-all-imaginary}. Hence
\(\overline{x_k}=-x_k\), which yields
\(x_{n+1-k}=-x_k\).
\end{proof}
\begin{remark}
\label{rem:fp-symmetry-computation}
Proposition~\ref{prop:fp-symmetry} explicitly identifies the paired
branches \(k\) and \(n+1-k\). For the central initial values, this
pairing is also preserved throughout the iteration. Indeed, for
\[
\theta_0^{(k)}=\frac{k\pi}{n+1},
\qquad k=1,\ldots,n,
\]
we have
\[
\theta_0^{(n+1-k)}
=
\frac{(n+1-k)\pi}{n+1}
=
\pi-\frac{k\pi}{n+1}
=
\pi-\overline{\theta_0^{(k)}},
\]
since the initial values are real.

Now suppose that, for some \(r\geq0\),
\[
\theta_r^{(n+1-k)}
=
\pi-\overline{\theta_r^{(k)}}.
\]
Using the iteration formula and
\eqref{eq:fp-map-symmetry}, we obtain
\[
\begin{aligned}
\theta_{r+1}^{(n+1-k)}
&=
T_{n,n+1-k}\!\left(\theta_r^{(n+1-k)}\right)\\
&=
T_{n,n+1-k}\!\left(
\pi-\overline{\theta_r^{(k)}}
\right)\\
&=
\pi-\overline{
T_{n,k}\!\left(\theta_r^{(k)}\right)
}\\
&=
\pi-\overline{\theta_{r+1}^{(k)}}.
\end{aligned}
\]
Hence, by induction,
\[
\theta_r^{(n+1-k)}
=
\pi-\overline{\theta_r^{(k)}},
\qquad r\geq0.
\]

Therefore, the symmetry between the paired branches is preserved at
every step of the fixed-point iteration. Consequently, it is sufficient
to compute only the branches
\[
k=1,\ldots,
\left\lceil\frac{n}{2}\right\rceil,
\]
since the remaining zeros can be recovered by complex conjugation.
\end{remark}

\begin{theorem}
\label{thm:fp-strip}
For every $n\geq1$, all zeros $z$ of $\Le_n$ satisfy
\begin{equation}
\label{eq:fp-strip}
|\operatorname{Re}z|
\leq
2\sinh\!\left(\frac{1}{n+1}\right).
\end{equation}
Consequently,
\[
\lim_{n\to\infty}
\max_{\Le_n(z)=0}
|\operatorname{Re}z|
=
0.
\]
\end{theorem}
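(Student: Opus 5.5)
By Theorem~\ref{thm:fp-all-zeros}, every zero of $\Le_n$ has the form $z=x_k=2i\cos\theta_*^{(k)}$ for some $k\in\{1,\ldots,n\}$, with $\theta_*^{(k)}\in D_{n,k}$. The plan is therefore to compute $\operatorname{Re}z$ in terms of $\theta_*^{(k)}=u+iv$ and then use the disk constraint to bound $|v|$.

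Since
\[
\cos(u+iv)=\cos u\cosh v-i\sin u\sinh v,
\]
we get
\[
2i\cos(u+iv)=2\sin u\sinh v+2i\cos u\cosh v,
\]
and hence
\[
\operatorname{Re}z=2\sin u\,\sinh v,\qquad |\operatorname{Re}z|\leq 2|\sinh v|=2\sinh|v|.
\]

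It remains to bound $|v|$. The center $k\pi/(n+1)$ of $D_{n,k}$ is real and its radius is $1/(n+1)$, so every point of the disk has $|\operatorname{Im}\theta|\leq 1/(n+1)$. In particular $|v|\leq 1/(n+1)$. Since $\sinh$ is increasing on $[0,\infty)$, this gives \eqref{eq:fp-strip}.

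The limit statement then follows from
\[
0\leq\max_{\Le_n(z)=0}|\operatorname{Re}z|\leq 2\sinh\!\left(\frac{1}{n+1}\right)\to0
\]
and the squeeze theorem.

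The only delicate point is making sure every zero is covered by the representation. Theorem~\ref{thm:fp-all-zeros} already guarantees this: it shows the $n$ values $x_k$ exhaust the zero set. So there is no real obstacle beyond the elementary identity for $\cos(u+iv)$.

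One could try to sharpen the bound using $|v|\leq 3/(4(n+1))$, which comes from the invariance estimate $|T_{n,k}(\theta)-k\pi/(n+1)|\leq 3/(4(n+1))$ applied at the fixed point. The stated theorem does not need this refinement.
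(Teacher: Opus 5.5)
Your proof is correct and follows essentially the same route as the paper. You represent each zero via Theorem~\ref{thm:fp-all-zeros}, use $\operatorname{Re}\bigl(2i\cos(u+iv)\bigr)=2\sin u\sinh v$ together with $|v|\le 1/(n+1)$ from the disk $D_{n,k}$, and then squeeze; your side remark that the identity $\theta_*^{(k)}=T_{n,k}(\theta_*^{(k)})$ gives the sharper bound $|v|\le 3/(4(n+1))$ is also valid, though not needed.
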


\begin{proof}
By Theorem~\ref{thm:fp-all-zeros}, each zero can be written as $z=2i\cos\theta_*^{(k)}$ for some $k\in\{1,\ldots,n\}$, with $\theta_*^{(k)}\in D_{n,k}$. Write $\theta_*^{(k)}=u+iv$, where $u,v\in\mathbb{R}$. Since the disk is centered on the real axis,
\[
|v|\leq\frac{1}{n+1}.
\]
The identity
\[
z
=
2i\cos(u+iv)
=
2\sin u\sinh v
+
2i\cos u\cosh v
\]
then gives
\[
|\operatorname{Re}z|
=
2|\sin u|\,|\sinh v|
\leq
2\sinh\!\left(\frac{1}{n+1}\right).
\]
This proves~\eqref{eq:fp-strip}. Since the upper bound tends to zero as $n\to\infty$, the same holds for the maximum distance of the zeros from the imaginary axis.
\end{proof}
\begin{corollary}
\label{cor:fp-aposteriori}
Let \(n\geq1\), \(k\in\{1,\ldots,n\}\), and let
\(\theta_r^{(k)}\) be the iteration defined by
\eqref{eq:fp-iteration}, with
\(\theta_0^{(k)}\in D_{n,k}\).
Let
\[
x_k=2i\cos\theta_*^{(k)}
\]
be the corresponding zero of \(\Le_n\).
For \(r\geq0\), set
\[
\widehat{\theta}_k=\theta_{r+1}^{(k)},
\qquad
\widehat{x}_k=2i\cos\widehat{\theta}_k.
\]
Then
\begin{equation}
\label{eq:fp-aposteriori-angle}
\left|
\widehat{\theta}_k-\theta_*^{(k)}
\right|
\leq
\frac{\kappa_n}{1-\kappa_n}
\left|
\theta_{r+1}^{(k)}-\theta_r^{(k)}
\right|,
\end{equation}
and
\begin{equation}
\label{eq:fp-aposteriori-root}
\left|\widehat{x}_k-x_k\right|
\leq
2\cosh\!\left(\frac{1}{n+1}\right)
\frac{\kappa_n}{1-\kappa_n}
\left|
\theta_{r+1}^{(k)}-\theta_r^{(k)}
\right|.
\end{equation}
Consequently, for any \(\varepsilon>0\), the stopping condition
\[
\left|
\theta_{r+1}^{(k)}-\theta_r^{(k)}
\right|<\varepsilon
\]
implies
\begin{equation}
\label{eq:fp-stopping-root-error}
\left|\widehat{x}_k-x_k\right|
<
2\cosh\!\left(\frac{1}{n+1}\right)
\frac{\kappa_n}{1-\kappa_n}\,\varepsilon.
\end{equation}
\end{corollary}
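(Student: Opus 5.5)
The plan is to obtain the angle estimate \eqref{eq:fp-aposteriori-angle} from the contraction property of Theorem~\ref{thm:fp-contraction}, and then convert it into the root estimate \eqref{eq:fp-aposteriori-root} through a Lipschitz bound for $\theta\mapsto 2i\cos\theta$ on the disk $D_{n,k}$.

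First, by Theorem~\ref{thm:fp-convergence}, the iterates $\theta_r^{(k)}$ remain in $D_{n,k}$, and the fixed point $\theta_*^{(k)}$ lies in $D_{n,k}$. Since $\theta_*^{(k)}=T_{n,k}(\theta_*^{(k)})$ and $\widehat\theta_k=T_{n,k}(\theta_r^{(k)})$, the Lipschitz bound \eqref{eq:fp-lipschitz} gives
\[
|\widehat\theta_k-\theta_*^{(k)}|\le\kappa_n|\theta_r^{(k)}-\theta_*^{(k)}|
\le\kappa_n\bigl(|\theta_r^{(k)}-\widehat\theta_k|+|\widehat\theta_k-\theta_*^{(k)}|\bigr).
\]
Rearranging, which is allowed because $\kappa_n<1$, yields \eqref{eq:fp-aposteriori-angle}.

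Second, for $\theta,\eta\in D_{n,k}$ I will write
\[
\cos\theta-\cos\eta=-(\theta-\eta)\int_0^1\sin\bigl(\eta+t(\theta-\eta)\bigr)\,dt.
\]
This is legitimate because the disk is convex, so the segment joining $\eta$ and $\theta$ stays in $D_{n,k}$. On that segment every point $u+iv$ satisfies $|v|\le 1/(n+1)$. Using $|\sin(u+iv)|^2=\sin^2u+\sinh^2v\le\cosh^2v$, as in the proof of Theorem~\ref{thm:fp-contraction}, we get $|\sin|\le\cosh(1/(n+1))$ on the segment. Hence
\[
|2i\cos\theta-2i\cos\eta|\le 2\cosh\!\left(\tfrac1{n+1}\right)|\theta-\eta|.
\]
Applying this with $\theta=\widehat\theta_k$ and $\eta=\theta_*^{(k)}$, together with \eqref{eq:fp-aposteriori-angle}, gives \eqref{eq:fp-aposteriori-root}.

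Finally, the stopping condition $|\theta_{r+1}^{(k)}-\theta_r^{(k)}|<\varepsilon$ substituted into \eqref{eq:fp-aposteriori-root} gives \eqref{eq:fp-stopping-root-error}. The inequality stays strict because the constant $2\cosh(1/(n+1))\kappa_n/(1-\kappa_n)$ is positive.

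No step is a genuine obstacle. The only point needing care is checking that both endpoints, and hence the whole segment, lie in $D_{n,k}$, so that the imaginary-part bound $|v|\le 1/(n+1)$ holds along the path of integration.
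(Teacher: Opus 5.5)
Your proposal is correct and follows the paper's proof essentially step for step. Both arguments use the same ingredients: the contraction bound combined with the triangle inequality and then rearranged (using $\kappa_n<1$) for the angle estimate, and the bound $|\sin\theta|\le\cosh(1/(n+1))$ on the convex disk $D_{n,k}$, integrated along a segment, for the Lipschitz estimate of $2i\cos\theta$ that gives the root estimate and the stopping bound.
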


\begin{proof}
By the contraction property and the triangle inequality,
\[
\begin{aligned}
\left|
\widehat{\theta}_k-\theta_*^{(k)}
\right|
&\leq
\kappa_n
\left|
\theta_r^{(k)}-\theta_*^{(k)}
\right|\\
&\leq
\kappa_n
\left(
\left|
\theta_{r+1}^{(k)}-\theta_r^{(k)}
\right|
+
\left|
\widehat{\theta}_k-\theta_*^{(k)}
\right|
\right).
\end{aligned}
\]
Since \(\kappa_n<1\), rearranging gives
\eqref{eq:fp-aposteriori-angle}.

To estimate the error in the corresponding zero, define
\[
g(\theta)=2i\cos\theta.
\]
For every \(\theta\in D_{n,k}\),
\[
|\operatorname{Im}\theta|
\leq\frac{1}{n+1}.
\]
Using
\[
|\sin(u+iv)|^2
=
\sin^2u+\sinh^2v
\leq\cosh^2v,
\]
we obtain
\[
|g'(\theta)|
=
2|\sin\theta|
\leq
2\cosh\!\left(\frac{1}{n+1}\right).
\]
Both \(\widehat{\theta}_k\) and \(\theta_*^{(k)}\) belong to
the convex disk \(D_{n,k}\). Integrating \(g'\) along the
segment joining these points therefore yields
\[
\left|\widehat{x}_k-x_k\right|
\leq
2\cosh\!\left(\frac{1}{n+1}\right)
\left|
\widehat{\theta}_k-\theta_*^{(k)}
\right|.
\]
Combining this inequality with
\eqref{eq:fp-aposteriori-angle} proves
\eqref{eq:fp-aposteriori-root}.
The stopping bound
\eqref{eq:fp-stopping-root-error} follows immediately.
\end{proof}
\section{Numerical Examples and Root Distributions}
\label{sec:numerical}

We illustrate the fixed-point method with the polynomials
\(\Le_5\) and \(\Le_8\). On each branch, we choose
\[
\theta_0^{(k)}=\frac{k\pi}{n+1},
\qquad k=1,\ldots,n,
\]
the center of \(D_{n,k}\).
Convergence follows from
Theorem~\ref{thm:fp-convergence}, and
Theorem~\ref{thm:fp-all-zeros} ensures that the limits
determine all the distinct zeros of \(\Le_n\). For a general treatment of fixed-point iteration and the error
analysis of iterative methods,
see~\cite{BurdenFairesBurden2016}.

The numerical computations were performed using the
\texttt{mpmath} library~\cite{mpmath} with a working
precision of 100 decimal digits.
The root-distribution plots were generated using
Matplotlib~\cite{Hunter2007}.
All computations were carried out in the Google
Colaboratory environment.

On each branch, the iteration was stopped at the first
index \(r\) for which
\begin{equation}
\label{eq:numerical-stopping}
\left|
\theta_{r+1}^{(k)}-\theta_r^{(k)}
\right|<10^{-12}.
\end{equation}
The stopping index may depend on the branch.
We denote the resulting approximations by
\[
\widehat{\theta}_k=\theta_{r+1}^{(k)},
\qquad
\widehat{x}_k=2i\cos\widehat{\theta}_k.
\]
Corollary~\ref{cor:fp-aposteriori} relates this stopping
criterion directly to an upper bound for
\(\left|\widehat{x}_k-x_k\right|\).
The bound can be evaluated without knowing the numerical
value of the exact zero \(x_k\).

The theoretical error bounds concern the truncation of
the exact fixed-point iteration and do not include
finite-precision roundoff.
The working precision of 100 decimal digits was chosen
to reduce the effects of roundoff in the computations.

Table entries are rounded to ten decimal places.
The error bounds and polynomial residuals given below
refer to the approximations before this rounding.

The residual \(\left|\Le_n(\widehat{x}_k)\right|\) is used
as an additional check of how closely the computed approximation
satisfies the polynomial equation. For background on polynomial
rootfinding and a posteriori error analysis, see
\cite{BiniFiorentino2000}. The distance to the corresponding
exact zero is controlled by Corollary~\ref{cor:fp-aposteriori}.

\begin{example}
\label{ex:numerical-le5}
\normalfont
For $n=5$, we have
\[
\Le_5(x)=2x^5+8x^3+6x-1,
\qquad
\lambda_5=-\frac{i}{2}.
\]
Thus, the iteration takes the form
\begin{equation}
\label{eq:numerical-le5-iteration}
\theta_{r+1}^{(k)}
=
\frac{
k\pi+(-1)^k
\arcsin\!\left(-\frac{i}{2}\sin\theta_r^{(k)}\right)
}{6},
\qquad k=1,\ldots,5.
\end{equation}
Theorem~\ref{thm:fp-contraction} gives the common contraction bound
$\kappa_5=\sqrt{61}/48\approx0.1627135349$.

By Proposition~\ref{prop:fp-symmetry}, the middle branch
\(k=3\) yields the unique positive real zero.
Starting from $\theta_0^{(3)}=\pi/2$, write
\[
\theta_r^{(3)}=\frac{\pi}{2}+iy_r,
\qquad y_0=0.
\]
Using $\sin(\pi/2+iy)=\cosh y$ and
$\arcsin(-it)=-i\operatorname{arsinh}(t)$ for real $t$,
equation~\eqref{eq:numerical-le5-iteration} reduces to the real
iteration
\[
y_{r+1}
=
\frac16\operatorname{arsinh}\!\left(\frac12\cosh y_r\right),
\qquad y_0=0,
\]
where $\operatorname{arsinh}(t)=\log(t+\sqrt{1+t^2})$.
The first four iterates are
\[
\begin{aligned}
y_1&\approx0.0802019708433,
&\qquad y_2&\approx0.0804417419131,\\
y_3&\approx0.0804431779994,
& y_4&\approx0.0804431866136.
\end{aligned}
\]
The limit satisfies $y_*\approx0.0804431866656$, and hence
\[
\theta_*^{(3)}=\frac{\pi}{2}+iy_*,
\qquad
x_3=2\sinh y_*\approx0.1610599483.
\]
In particular, $0<x_3<1/6$, in agreement with
Theorem~\ref{thm:positive-root-bounds}.

At the fourth iterate, the a priori estimate
\eqref{eq:fp-error} gives
\[
\left|
\theta_4^{(3)}-\theta_*^{(3)}
\right|
\leq
\frac{\kappa_5^4}{1-\kappa_5}
\left|
\theta_1^{(3)}-\theta_0^{(3)}
\right|.
\]
The right-hand side is approximately
\(6.71438\times10^{-5}\).

For the final approximations obtained when
\eqref{eq:numerical-stopping} is satisfied,
Corollary~\ref{cor:fp-aposteriori} gives
\[
\max_{1\leq k\leq5}
\left|\widehat{x}_k-x_k\right|
<
2\cosh\!\left(\frac16\right)
\frac{\kappa_5}{1-\kappa_5}\,10^{-12}
<
3.95\times10^{-13}.
\]
This is a truncation error bound for the final zero
approximations, evaluated before rounding the table entries.

The results for all five branches are listed in
Table~\ref{tab:numerical-le5}. The zeros consist of one positive
real zero and two nonreal conjugate pairs, with
$x_5=\overline{x_1}$ and $x_4=\overline{x_2}$.
Their locations are shown in Figure~\ref{fig:numerical-le5}.
The maximum residual obtained with the stated stopping criterion is
\[
\max_{1\leq k\leq5}
\left|\Le_5(\widehat{x}_k)\right|
\approx2.08818\times10^{-13}.
\]

\begin{center}
\captionof{table}{Initial values, approximate fixed points, and approximate
zeros of $\Le_5$.}
\label{tab:numerical-le5}

\small
\setlength{\tabcolsep}{4pt}
\renewcommand{\arraystretch}{1.15}
\begin{tabular}{c c c c}
\hline
$k$ & $\theta_0^{(k)}$ & $\widehat{\theta}_k$ & $\widehat{x}_k$ \\
\hline
1 & $0.5235987756$
  & $0.5207160272+0.0410823591i$
  & $0.0408885611+1.7363906309i$ \\
2 & $1.0471975512$
  & $1.0445007726-0.0701530395i$
  & $-0.1214185353+1.0071405391i$ \\
3 & $1.5707963268$
  & $1.5707963268+0.0804431867i$
  & $0.1610599483$ \\
4 & $2.0943951024$
  & $2.0970918810-0.0701530395i$
  & $-0.1214185353-1.0071405391i$ \\
5 & $2.6179938780$
  & $2.6208766264+0.0410823591i$
  & $0.0408885611-1.7363906309i$ \\
\hline
\end{tabular}
\end{center}

\begin{center}
\includegraphics[width=0.50\textwidth]{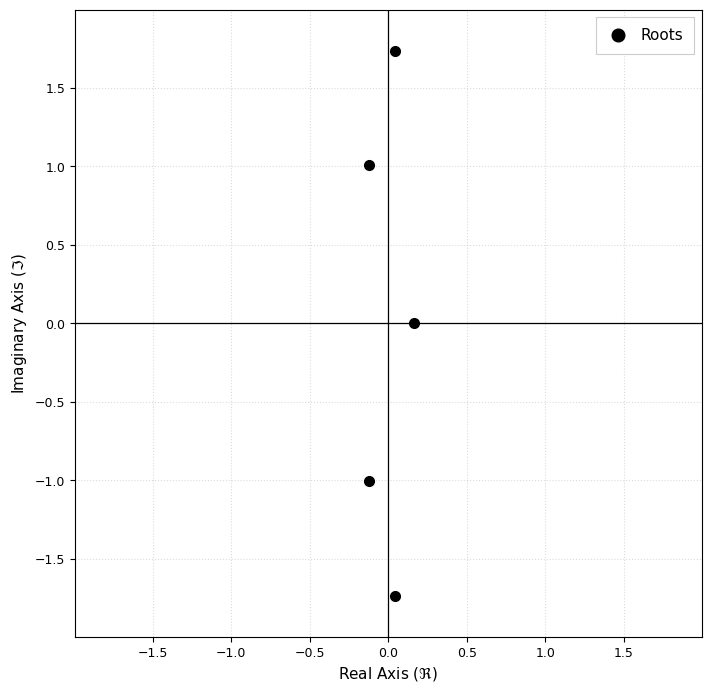}
\captionof{figure}{Zeros of $\Le_5$: one positive real zero and two nonreal
conjugate pairs.}
\label{fig:numerical-le5}
\end{center}

\end{example}

\begin{example}
\label{ex:numerical-le8}
\normalfont

For $n=8$, the polynomial is
\[
\Le_8(x)=2x^8+14x^6+30x^4+20x^2+1.
\]
Since $\lambda_8=1/2$, the iteration becomes
\[
\theta_{r+1}^{(k)}
=
\frac{
k\pi+(-1)^k
\arcsin\!\left(\frac12\sin\theta_r^{(k)}\right)
}{9},
\qquad k=1,\ldots,8.
\]
The common contraction bound is
$\kappa_8=\sqrt{61}/72\approx0.1084756899$.
Starting with $\theta_0^{(1)}=\pi/9$, the first branch gives
\[
\begin{aligned}
\theta_1^{(1)}&\approx0.3299708780,
&\qquad\theta_2^{(1)}&\approx0.3309853024,\\
\theta_3^{(1)}&\approx0.3309312793,
&\theta_4^{(1)}&\approx0.3309341560.
\end{aligned}
\]
The limiting fixed point and its corresponding zero are
\[
\theta_*^{(1)}\approx0.3309340105,
\qquad
x_1=2i\cos\theta_*^{(1)}\approx1.8914785426i.
\]
At the fourth iterate, the a priori estimate
\eqref{eq:fp-error} gives
\[
\left|
\theta_4^{(1)}-\theta_*^{(1)}
\right|
\leq
\frac{\kappa_8^4}{1-\kappa_8}
\left|
\theta_1^{(1)}-\theta_0^{(1)}
\right|.
\]
The right-hand side is approximately
\(2.96562\times10^{-6}\).

For the final approximations obtained when
\eqref{eq:numerical-stopping} is satisfied,
Corollary~\ref{cor:fp-aposteriori} gives
\[
\max_{1\leq k\leq8}
\left|\widehat{x}_k-x_k\right|
<
2\cosh\!\left(\frac19\right)
\frac{\kappa_8}{1-\kappa_8}\,10^{-12}
<
2.45\times10^{-13}.
\]
This is a truncation error bound for the final zero
approximations, evaluated before rounding the table entries.

Table~\ref{tab:numerical-le8} lists the approximations for all eight branches.
The iterates remain real because the initial values are real and
the arguments of $\arcsin$ lie in \([-1/2,1/2]\).
Proposition~\ref{prop:fp-symmetry} gives \(x_{9-k}=-x_k\), \(k=1,\ldots,4\).
Thus, the zeros form four symmetric pairs on the imaginary axis,
with imaginary parts in \((-2,2)\), in agreement with
Theorem~\ref{thm:even-all-imaginary} and as illustrated in Figure~\ref{fig:numerical-le8}.
The maximum polynomial residual, evaluated before rounding, is
\[
\max_{1\le k\le 8}
\left|\mathrm{Le}_8(\widehat{x}_k)\right|
\approx 5.42560\times10^{-13}.
\]

\begin{center}
\captionof{table}{Initial values, approximate fixed points, and approximate
zeros of $\Le_8$.}
\label{tab:numerical-le8}

\small
\renewcommand{\arraystretch}{1.15}
\begin{tabular}{c c c c}
\hline
$k$ & $\theta_0^{(k)}$ & $\widehat{\theta}_k$ & $\widehat{x}_k$ \\
\hline
1 & $0.3490658504$ & $0.3309340105$ & $1.8914785426i$ \\
2 & $0.6981317008$ & $0.7361737838$ & $1.4820862364i$ \\
3 & $1.0471975512$ & $0.9989806471$ & $1.0823195618i$ \\
4 & $1.3962634016$ & $1.4540045207$ & $0.2330529481i$ \\
5 & $1.7453292520$ & $1.6875881329$ & $-0.2330529481i$ \\
6 & $2.0943951024$ & $2.1426120065$ & $-1.0823195618i$ \\
7 & $2.4434609528$ & $2.4054188698$ & $-1.4820862364i$ \\
8 & $2.7925268032$ & $2.8106586431$ & $-1.8914785426i$ \\
\hline
\end{tabular}
\end{center}

The numerical values in Table~\ref{tab:numerical-le8} show that the
eight zeros are purely imaginary and occur in four symmetric pairs.
Their distribution in the complex plane is shown in
Figure~\ref{fig:numerical-le8}.

\begin{center}
\includegraphics[width=0.45\textwidth]{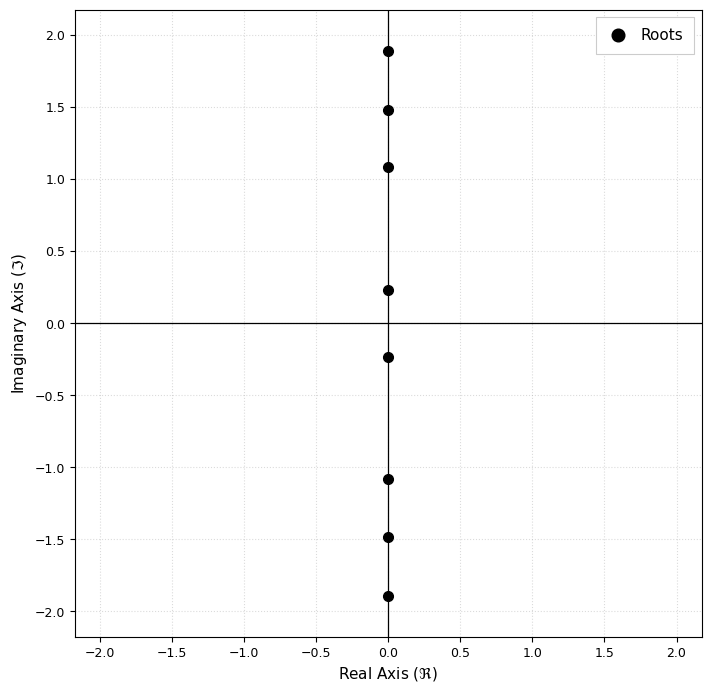}
\captionof{figure}{The eight zeros of $\Le_8$ form four symmetric pairs
on the imaginary axis.}
\label{fig:numerical-le8}
\end{center}

This numerical configuration agrees with the theoretical description
of the zeros obtained above.

\end{example}

Figure~\ref{fig:numerical-higher-degrees} displays the zeros of
$\Le_{12}$, $\Le_{47}$, and $\Le_{99}$ obtained by the same fixed-point
construction. All zeros of $\Le_{12}$ lie on the imaginary axis,
whereas each of the odd-degree polynomials has one positive real
zero and nonreal conjugate pairs. The small real parts in the
odd-degree cases are consistent with the bound
\[
|\operatorname{Re}z|
\leq2\sinh\!\left(\frac{1}{n+1}\right)
\]
from Theorem~\ref{thm:fp-strip}.
The horizontal scales in panels~\subref{fig:numerical-le47}
and~\subref{fig:numerical-le99} are enlarged to make these real
parts visible. Therefore, the apparent widths of the distributions
should be interpreted using the numerical axis scales.

\begin{figure}[!htbp]
\centering

\begin{subfigure}[t]{0.32\textwidth}
\centering
\includegraphics[
  width=\linewidth,
  height=0.96\linewidth,
  keepaspectratio
]{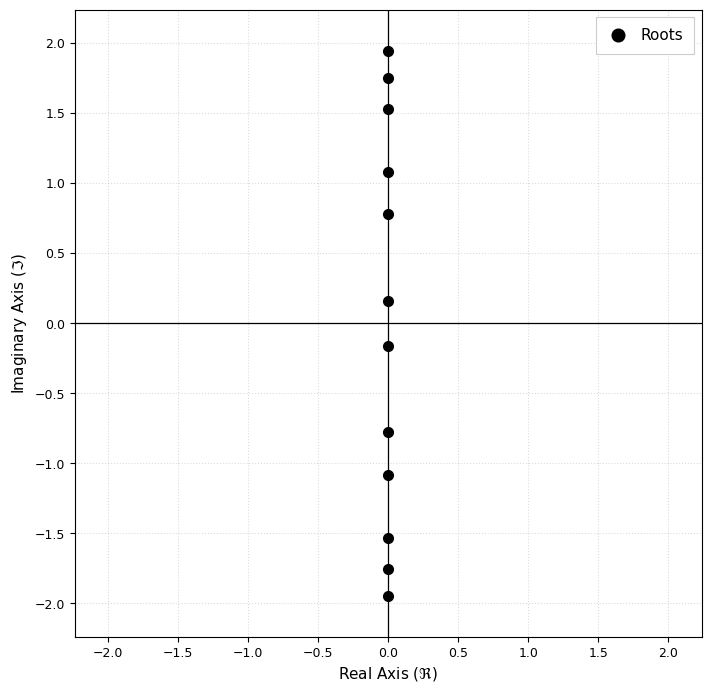}
\caption{$\Le_{12}$}
\label{fig:numerical-le12}
\end{subfigure}\hfill
\begin{subfigure}[t]{0.32\textwidth}
\centering
\includegraphics[
  width=\linewidth,
  height=0.96\linewidth,
  keepaspectratio
]{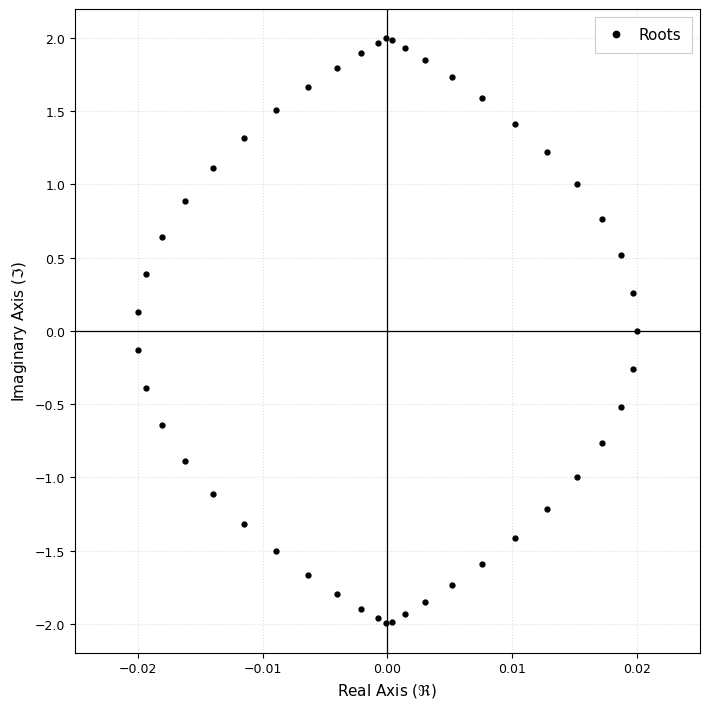}
\caption{$\Le_{47}$}
\label{fig:numerical-le47}
\end{subfigure}\hfill
\begin{subfigure}[t]{0.32\textwidth}
\centering
\includegraphics[
  width=\linewidth,
  height=0.96\linewidth,
  keepaspectratio
]{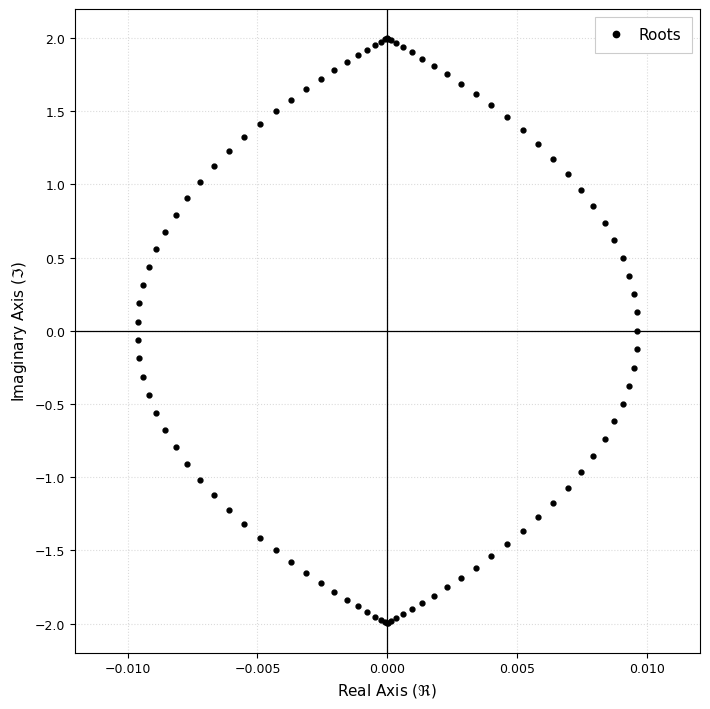}
\caption{$\Le_{99}$}
\label{fig:numerical-le99}
\end{subfigure}

\caption{Root distributions for three higher degrees.
Black points represent the zeros.
In panels (b) and (c), the real and imaginary axes use different
scales.}

\label{fig:numerical-higher-degrees}
\end{figure}

\section{Conclusion}
The fixed-point formulation developed in this work connects
the location and separation of Leonardo polynomial zeros
with their numerical approximation. By constructing a distinct
invariant disk for each corresponding angular parameter,
the method provides a constructive description of the complete
zero set together with convergence guarantees and explicit
error estimates.

A natural direction for further research is to extend this
approach to generalized Leonardo polynomials, including
bivariate families. In particular, it would be useful to
identify parameter ranges for which suitable invariant domains
and contraction estimates can be established, and to investigate
how the parameters affect the location and multiplicity of
the zeros.

\Authorcontributions{}{All the co-authors have contributed equally in all aspects of the preparation of this submission.}

\Competinginterests{}{The authors declare that they have no known competing financial interests or personal relationships that could have appeared to
	influence the work reported in this paper.}


\begin{thebibliography}{99}
\bibitem{AlpKocer2021} Alp, Y. and Koçer, E. G. (2021). Some properties of Leonardo numbers. Konuralp Journal of Mathematics 9(1):183--189

\bibitem{AlpKocer2025} Alp, Y. and Koçer, E. G. (2025). Bivariate Leonardo polynomials and Riordan arrays. Notes on Number Theory Discrete Mathematics 31(2):236--250. DOI: 10.7546/nntdm.2025.31.2.236-250

\bibitem{Bartle and Sherbert2011} Bartle, R. G. and Sherbert, D. R. (2011). Introduction to Real Analysis. John Wiley \& Sons, 4th edition.

\bibitem{Berinde2007} Berinde, V. (2007). Iterative Approximation of Fixed Points, volume 1912 of Lecture Notes in Mathematics. Springer, Berlin, Heidelberg, second edition. DOI: 10.1007/978-3-540-72234-2

\bibitem{BiniFiorentino2000} Bini, D. A. and Fiorentino, G. (2000). Design, analysis, and implementation of a multiprecision polynomial rootfinder. Numerical Algorithms 23:127–173. DOI: 10.1023/A:1019199917103

\bibitem{Brown and Churchill2014} Brown, J. W. and Churchill, R. V. (2014). Complex Variables and Applications. McGraw-Hill Education, 9th edition.

\bibitem{BurdenFairesBurden2016} Burden, R. L., Faires, J. D., and Burden, A. M. (2016). Numerical Analysis. Cengage
Learning, 10th edition.

\bibitem{Catarino and Borges2020} Catarino, P. and Borges, A. (2020). On Leonardo numbers. Acta Mathematica
Universitatis Comenianae 89(1):75–86.

\bibitem{FrontczakGoy2021} Frontczak, R. and Goy, T. (2021). Chebyshev–Fibonacci polynomial relations
using generating functions. Integers 21:A100. DOI: 10.5281/zenodo.10839350

\bibitem{HoggattBicknell1973} Hoggatt, Jr., V. E. and Bicknell, M. (1973). Roots of Fibonacci polynomials. The Fibonacci Quarterly 11(3):271--274. DOI: 10.1080/00150517.1973.12430825

\bibitem{Hunter2007} Hunter, J. D. (2007). Matplotlib: A 2D graphics environment. Computing in Science \& Engineering 9(3):90--95. DOI: 10.1109/MCSE.2007.55

\bibitem{Kaymak2018} \"{O}ztunç Kaymak, \"{O}. (2018). Some remarks on the zeros of tribonacci polynomials. International Journal of Analysis and Applications 16(3):368–373. DOI:10.28924/2291-8639-16-2018-368

\bibitem{Koshy2001} Koshy, T. (2001). Fibonacci and Lucas Numbers with Applications. John Wiley \& Sons, Inc., 1st edition. DOI: 10.1002/9781118033067

\bibitem{KumariPrasadSingh2026} Kumari, M., Prasad, K., and Singh, M. K. (2026). On generalized metallic Leonardo numbers: Silver, Bronze, and Copper cases. Universal Journal of Mathematics and Applications 9(1):44--52. DOI: 10.32323/ujma.1854795

\bibitem{MasonHandscomb2002} Mason, J. C. and Handscomb, D. C. (2002). Chebyshev Polynomials. Chapman and Hall/CRC, 1st edition. DOI: 10.1201/9781420036114

\bibitem{KaymakOzgur2022} \"{O}ztunç Kaymak, \"{O}. and \"{O}zg\"{u}r, N. (2022). On the zeros of \(R\)-bonacci polynomials and their derivatives. Communications Faculty of Sciences University of Ankara Series A1 Mathematics and Statistics 71(4):978--992. DOI: 10.31801/cfsuasmas.1037229

\bibitem{PrasadKumari2025} Prasad, K. and Kumari, M. (2025). The Leonardo polynomials and their algebraic
properties. Proceedings of the Indian National Science Academy 91:662–671. DOI: 10.1007/s43538-024-00348-0

\bibitem{PrasadKumariSingh2026} Prasad, K., Kumari, M., and Singh, M. K. (2026). A note on Gaussian family of the
Leonardo polynomials. Integral Transforms and Special Functions, pages 1–17. DOI: 10.1080/10652469.2026.2629501

\bibitem{Rivlin2020} Rivlin, T. J. (2020). Chebyshev Polynomials: From Approximation Theory to Algebra and
Number Theory. Dover Publications, 2nd edition.

\bibitem{SmajlovicSabanacSceta2025} Smajlovi\'{c}, L., \v{S}abanac, Z., and \v{S}\'{c}eta, L. (2025). Relations between Chebyshev, Fibonacci and Lucas polynomials via trigonometric sums. The Fibonacci Quarterly, 63(2):439--455. DOI: 10.1080/00150517.2025.2460552

\bibitem{Soykan2021} Soykan, Y. (2021). Generalized Leonardo numbers. Journal of Progressive Research in
Mathematics, 18(4):58–84.

\bibitem{Soykan2026} Soykan, Y. (2026). Leonardo polynomials and numbers: Solutions, linearizations, and generating functions. International Journal of Advances in Applied Mathematics and Mechanics, 13(4):80--222. DOI: 10.26541/ijaamm.2026.130408

\bibitem{mpmath} The mpmath development team (2026). mpmath: a Python library
for arbitrary-precision floating-point arithmetic.

\bibitem{TugluEkinciKeskin2026} Tu\u{g}lu, N., Ekinci, S. S., and \c{C}etin Keskin, M. (2026). \(q\)-Leonardo polynomials. Notes on Number Theory and Discrete Mathematics, 32(2):281--299. DOI: 10.7546/nntdm.2026.32.2.281-299

\bibitem{Vajda1989} Vajda, S. (1989). Fibonacci \& Lucas Numbers, and the Golden Section: Theory and
Applications. Ellis Horwood Ltd. and Halsted Press.

\bibitem{Wang2004} Wang, X. (2004). A simple proof of Descartes's rule of signs. The American Mathematical Monthly, 111(6):525--526. DOI: 10.1080/00029890.2004.11920108

\bibitem{Yagmur2023} Ya\u{g}mur, T. (2023). On Gaussian Leonardo hybrid polynomials. Symmetry, 15(7):1422. DOI: 10.3390/sym15071422

\end{thebibliography}
\end{document}